\begin{document} 

 \theoremstyle{plain} 
 \newtheorem{theorem}{Theorem}[section] 
 \newtheorem{lemma}[theorem]{Lemma} 
 \newtheorem{corollary}[theorem]{Corollary} 
 \newtheorem{proposition}[theorem]{Proposition} 

\theoremstyle{definition} 
\newtheorem{definition}{Definition}
\newtheorem{example}[theorem]{Example}
\newtheorem{remark}[theorem]{Remark}

\title[Galois correspondence]{On the Galois correspondence for Hopf Galois structures arising from finite radical algebras and Zappa-Sz\'ep products}

\author{Lindsay N. Childs}
\address{Department of Mathematics and Statistics\\
University at Albany\\
Albany, NY 12222}
\email{lchilds@albany.edu}

\date{\today}
\keywords{}
\subjclass{}	

\newcommand{\QQ}{\mathbb{Q}} 
\newcommand{\FF}{\mathbb{F}} 
\newcommand{\ZZ}{\mathbb{Z}}
\newcommand{\ZZm}{\mathbb{Z}/m\mathbb{Z}}
\newcommand{\ZZp}{\mathbb{Z}/p\mathbb{Z}}
\newcommand{\ben}{\begin{eqnarray}} 
\newcommand{\een}{\end{eqnarray}} 
\newcommand{\dis}{\displaystyle} 
\newcommand{\beal}{\[ \begin{aligned}} 
\newcommand{\eeal}{ \end{aligned} \]} 
\newcommand{\lb}{\lambda} 
\newcommand{\Gm}{\Gamma} 
\newcommand{\gm}{\gamma} 
\newcommand{\bpm}{\begin{pmatrix}} 
\newcommand{\epm}{\end{pmatrix}} 
\newcommand{\Fp}{\mathbb{F}_p} 
\newcommand{\Fpx}{\mathbb{F}_p^{\times}} 
\newcommand{\Ann}{\text{Ann} }
\newcommand{\tb}{\textbullet \ }
\newcommand{\GL}{\mathrm{GL}}

\newcommand{\M}{\mathrm{M}}
\newcommand{\Aut}{\mathrm{Aut}}
\newcommand{\End}{\mathrm{End}}
\newcommand{\mbf}{\mathbf}
\newcommand{\Perm}{\mathrm{Perm}}
\newcommand{\PGL}{\mathrm{PGL}}
\newcommand{\diag}{\mathrm{diag}}
\newcommand{\Reg}{\mathrm{Reg}}
\newcommand{\Hol}{\mathrm{Hol}}
\newcommand{\Inn}{\mathrm{Inn}}
\newcommand{\InHol}{\mathrm{InHol}}
\newcommand{\Lc}{\mathcal{L}}
\newcommand{\Hom}{\mathrm{Hom}}
\newcommand{\ol}{\overline}
\newcommand{\olx}{\overline{x}}

 \begin{abstract}
  Let $L/K$ be a $G$-Galois extension of fields with an $H$-Hopf Galois structure of type $N$.   We study the Galois correspondence ratio $GC(G, N)$,  which is the proportion of intermediate fields $E$ with $K \subseteq E \subseteq L$ that are in the image of the Galois correspondence for the $H$-Hopf Galois structure on $L/K$.   The Galois correspondence ratio for a Hopf Galois structure can be found by translating the problem into counting certain subgroups of the corresponding skew brace.  We look at skew braces arising from finite radical algebras $A$ and from Zappa-Sz\'ep products of finite groups, and in particular when $A^3 = 0$ or the Zappa-Sz\'ep product is a semidirect product, in which cases the corresponding skew brace is a bi-skew brace, that is,  a set $G$ with two group operations $\circ$ and $\star$ in such a way that $G$ is a skew brace with either group structure acting as the additive group of the skew brace.    We obtain the Galois correspondence ratio for  several examples.  In particular, if $(G, \circ, \star)$ is a bi-skew brace of squarefree order $2m$ where $(G, \circ) \cong Z_{2m}$ is cyclic and $(G, \star) \cong D_m$ is dihedral,  then for large $m$, $GC(Z_{2m},D_m), $ is close to 1/2 while $GC(D_m, Z_{2m})$ is near 0.
 
\end{abstract}
\maketitle

\

\

\section{Introduction}

 In 1969 S. Chase and M. Sweedler [CS69] defined the notion of an $H$-Hopf Galois structure on a finite extension of commutative rings.  The notion in particular applies to the case of a separable field extension $L/K$, where it extends the classical notion of a Galois extension, the case where  $L/K$ is normal, $G$ is the Galois group of $L/K$, and $H$ is the group ring $KG$.  

Let $L/K$ be a $G$-Galois extension of fields.  Let $H$ be a cocommutative $K$-Hopf algebra that acts on $L$ as an $H$-module algebra, and suppose $L$ is an $H$-Hopf Galois extension of $K$.  Then, as shown in [GP87], $L \otimes_K H = LN$ for some regular subgroup $N$ of $\Perm(G)$, where $N$ is normalized by the image $\lambda(G)$ of the left regular representation $\lambda:  G \to \Perm(G)$.    In turn, if $N$ is a regular subgroup of $\Perm(G)$ normalized by $\lambda(G)$, then $N$ yields by Galois descent a $K$-Hopf algebra $H = L[N]^G$ which acts on $L/K$ making $L/K$ a $H$-Hopf Galois extension.  In this way there is a bijection between Hopf Galois structures on the $G$-Galois extension $L/K$ and regular subgroups of $\Perm(G)$ normalized by $\lambda(G)$.  

Given an $H$-Hopf Galois structure on $L/K$, there is a Galois correspondence, originally described by Chase and Sweedler [CS69], namely, an injective correspondence from $K$-subHopf algebras of $H$ to fields $E$ with $K \subset E \subset L$, by
\[ H' \mapsto L^{H'} = \{x \in L | h'x = \epsilon(h')x \text{ for all $h'$ in }H'\},\]
 where $\epsilon: H \to K$ is the counit map.  But in contrast to classical Galois theory, the Galois correspondence for a Hopf Galois extension can fail to be surjective.     The first set of examples where surjectivity fails is in [GP87]:  let $N$ be the regular subgroup $N = \lambda(G)$ of $\Perm(G)$.  Then $N$ is normalized by itself, and the subgroups $N'$ of $N = \lambda(G)$ that are normalized by $\lambda(G)$ are the subgroups $\lambda(G')$ where $G'$ is a normal subgroup of $G$. So if $G$ is non-abelian and has non-normal subgroups, then surjectivity fails.  (The classical Galois structure on $L/K$  given by the Galois group corresponds to $\rho(G)$, the image in $\Perm(G)$ of the right regular representation of $G$ in $\Perm(G)$, $\rho(g)(h) = hg^{-1}$ for $g, h$ in $G$:  $\rho(G)$, hence every subgroup of $\rho(G)$, is centralized, hence normalized by $\lambda(G)$.)

But except for the Greither-Pareigis examples, very little was known about the image of the Galois correspondence for an $H$-Hopf Galois structure on a $G$-Galois extension $L/K$ of fields until [CRV16]  observed that the $K$-subHopf algebras of $H$ correspond bijectively to the subgroups $N'$ of  $N$ that are normalized by $\lambda(G)$.  Using that result, [Ch17] examined $G$-Galois extensions $L/K$ where $G$ is an abelian $p$-group of order $p^n$ and $L/K$ has an $H$-Hopf Galois structure of type $N$, also an abelian $p$-group of order $p^n$.   If $H$ has type $N$, then there is a regular  embedding of $G$ into $\Hol(N)$:  call the image $T$.  Then ([CDVS06], [FCC12]), that regular subgroup $T$ defines a commutative nilpotent ring structure $A = (N, +, \cdot)$ on the additive abelian group $N$ so that the regular subgroup $T$, and hence the Galois group $G$, is isomorphic to the adjoint group $(A, \circ)$ of the nilpotent ring $A$.  In that setting,  for the Hopf Galois structure given by $H$, the image of the Galois correspondence is in bijective correspondence with the ideals of $A$.  

If $A^e = 0$ and $e < p$, hence $G$ and $N$ are elementary abelian $p$-groups,  [CG18] computed upper and lower bounds on the proportion of  subgroups of $A$ that are ideals, and showed, for example, that if $4 \le e < p$, then the proportion of subgroups that are ideals is $< .01$. 

We note that the only known cases of  a non-classical $H$-Hopf Galois structure on a $G$-Galois field extension where the Galois correspondence for $H$ is surjective are for $G$ a non-abelian Hamiltonian group (every subgroup is normal) where $H$ corresponds to $\lambda(G)$, or for $G$ cyclic of odd prime power order and $H$ any Hopf Galois structure ([Ch17], Proposition 4.3).

The remainder of the paper is organized as follows.  Section 2 describes the relationship between Hopf Galois structures and skew braces, and section 3 describes the Galois correspondence ratio in the skew brace setting.  Section 4 looks at that ratio for skew braces arising from radical algebras, illustrated by a four-dimensional example $A = A^0_{4, 25}$.  Section 5 introduced bi-skew braces and finds the Galois correspondence ratio for the other skew brace structure for $A$, which exists because $A^3 = 0$. Section 6 looks at the Galois correspondence ratios  arising from fixed point free pairs of homomorphisms to a Zappa-Sz\'ep product of two finite groups from the corresponding direct product, with an example.  Section 7 specializes to semi-direct products of groups, which yield bi-skew braces, hence two Galois correspondence ratios.  Section 8 looks at examples where the order of $G$ is square-free, and in particular determines the two highly divergent ratios for $G$ a class of generalized dihedral groups.

Many thanks to the referees for their numerous helpful comments and corrections on a previous version of this paper.

\section{Hopf Galois structures and skew braces}
The concept of skew brace was first defined in [GV17] and the connection of skew braces with Hopf Galois structures was described in [SV18].  Following [SV18], in what follows, ``skew brace'' and ``brace'' will always mean ``skew left brace'' and ``left brace'', respectively.  

Finite radical rings are skew braces, so it was natural to generalize the description of the Galois correspondence ratio for radical rings in [Ch17] to the setting of skew braces.  This was done in [Ch18]. 

To see how this works, we first reexamine the description of  a Hopf Galois structure.

Let $G$ be a finite group, and denote the operation on $G$ by $\circ$.  Let $L/K$ be a $(G, \circ)$-Galois extension of fields.   If $L$ is also an $H$-Hopf Galois extension of $K$,  then  $L \otimes_K H = LN$ for some regular subgroup $N$ of $\Perm(G)$, where $N$ is normalized by the image $\lambda_\circ(G)$ of the left regular representation $\lambda_\circ:  G \to \Perm(G)$.
 
Since $N$ is a regular subgroup of $\Perm(G)$, the map $b: N \to G$ given by $n \mapsto n(e)$ is a bijection.  Then $b$ defines an operation $\star$ on $G$, as follows:  if $b(n_1) = g_1, b(n_2) = g_2$, we define
\[ g_1 \star g_2 = b(n_1n_2). \] 
Then  $N = \lambda_\star(G)$, the image of the left regular representation map $\lambda_\star$ corresponding to the operation $\star$.  For setting
$b(n_1) = g_1, b(n_2) = g_2$ and  $g_1\star g_2 = b(n_1n_2)$, then the action of $N \subset \Perm(G)$ on $G$ is
\beal n_1(g_2) &= n_1(n_2(e)) = (n_1n_2)(e)\\
& = b(n_1n_2) = g_1 \star g_2 = \lambda_{\star}(g_1)(g_2).\eeal

If $N = \lambda_\star(G)$ in $\Perm(G)$ is normalized by $\lambda_\circ(G)$,  then $\lambda_\circ(G)$ is contained in $\Hol(G, \star)$, the normalizer of $\lambda_\star(G)$ in $\Perm(G)$. This observation connects Hopf Galois structures with skew braces.
 
\begin{definition}  A  skew  brace is a finite set $B$ with two operations, $\star$ and $\circ$, so that $(B, \star)$ is a group (the ``additive group''), $(B, \circ)$ is a group, and the compatibility condition
\[ a \circ (b \star c) = (a \circ b) \star a^{-1} \star (a \circ c) \]
holds for all $a, b, c$ in $B$.  Here $a^{-1}$ is the inverse of $a$ in $(B, \star)$. Denote the inverse of $a$ in $(B, \circ)$ by $\overline{a}$.  \end{definition}

If $B$ has two operations $\star$ and $\circ$ and is a skew brace with $(B, \star)$ the additive group, then we write $B = B(\circ, \star)$  (i. e. the additive group operation is on the right).

A brace is a skew brace with abelian additive group.  Every brace $(A, \circ, +)$ with abelian circle group is  a radical algebra $(A, +, \cdot)$ [Ru07], where the algebra multiplication on $A$ is defined by $a \cdot b = a\circ b - a - b$.  

A set $B$ with two group operations $ \circ$ and $\star$  has two left regular representation maps:
 \beal \lb_\star:  &B\to \Perm(B), \lb_\star(b)(x) = b \star x,\\
 \lb_{\circ}: & B \to \Perm(B), \lb_{\circ}(b)(x) = b \circ x. \eeal
 Then  Guarnieri and Vendramin proved ([GV17], Proposition 1.9):
 
\begin{theorem}\label{1.9}  A set $(B, \circ, \star)$ with two group operations is a skew brace if and only if  the group homomorphism  $\lb_\circ:  (B , \circ) \to \Perm(B)$ has image in
\[\Hol(B, \star) = \lambda_\star(B) \Aut(B, \star) \subset \Perm(B),\] the normalizer in $\Perm(B)$ of $\lambda_{\star}(G)$.  \end{theorem}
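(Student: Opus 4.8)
The plan is to make everything rest on one auxiliary map. For each $a\in B$ I would define $\gamma_a\colon B\to B$ by $\gamma_a(x)=a^{-1}\star(a\circ x)$, with $a^{-1}$ the inverse in $(B,\star)$; as the composite of the bijections $x\mapsto a\circ x$ and $y\mapsto a^{-1}\star y$, each $\gamma_a$ is a bijection of $B$. Since $a\star\gamma_a(x)=a\circ x$, one has the permutation identity $\lambda_\circ(a)=\lambda_\star(a)\,\gamma_a$ for all $a\in B$. Granting the classical description of $\Hol(B,\star)$ as $\lambda_\star(B)\Aut(B,\star)$ with $\Aut(B,\star)$ equal to the stabilizer of the identity, this identity shows that $\lambda_\circ(a)\in\Hol(B,\star)$ if and only if $\gamma_a\in\Aut(B,\star)$, i.e., since $\gamma_a$ is already a bijection, if and only if $\gamma_a$ is an endomorphism of $(B,\star)$. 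So the theorem reduces to the single equivalence: the compatibility law holds for all $a,b,c\in B$ if and only if every $\gamma_a$ is additive.

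Before proving that equivalence I would record that a skew brace has only one identity: putting $a=e_\circ$ in the compatibility law gives $b\star c=b\star e_\circ^{-1}\star c$ for all $b,c$, whence $e_\circ=e_\star=:e$. (In the setting of this paper this is automatic in any case, since $\star$ is transported from the regular subgroup $N=\lambda_\star(B)$ via $n\mapsto n(e)$ with $e$ the identity of $(B,\circ)$.) In particular $\gamma_a(e)=a^{-1}\star(a\circ e)=e$, so $\gamma_a$ lies in the stabilizer of $e$, and to put $\gamma_a$ into $\Aut(B,\star)$ nothing beyond additivity is needed.

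For the forward direction I would assume the compatibility law and compute directly, using associativity of $\star$:
\[
\gamma_a(b\star c)=a^{-1}\star\bigl((a\circ b)\star a^{-1}\star(a\circ c)\bigr)=\bigl(a^{-1}\star(a\circ b)\bigr)\star\bigl(a^{-1}\star(a\circ c)\bigr)=\gamma_a(b)\star\gamma_a(c),
\]
so each $\gamma_a$ is additive. For the converse I would assume $\lambda_\circ(B)\subseteq\Hol(B,\star)$, write $\lambda_\circ(a)=\lambda_\star(c)\sigma$ with $c\in B$ and $\sigma\in\Aut(B,\star)$, and evaluate at $e$; using $a\circ e=a$ this forces $c=a$, hence $\sigma=\lambda_\star(a)^{-1}\lambda_\circ(a)=\gamma_a\in\Aut(B,\star)$ and $a\circ x=a\star\gamma_a(x)$. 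Reversing the computation then recovers the law: $a\circ(b\star c)=a\star\gamma_a(b)\star\gamma_a(c)=(a\star\gamma_a(b))\star a^{-1}\star(a\star\gamma_a(c))=(a\circ b)\star a^{-1}\star(a\circ c)$.

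I do not expect a genuine obstacle here; the argument is essentially bookkeeping once $\gamma_a$ is singled out as the right object. The two points that do require care are: (i) the classical structure of the holomorph --- that $\Hol(B,\star)$ is the internal product $\lambda_\star(B)\Aut(B,\star)$ with $\Aut(B,\star)$ the point stabilizer of $e$, which is exactly what converts ``$\lambda_\circ(a)\in\Hol(B,\star)$'' into ``$\gamma_a$ is a $\star$-automorphism''; and (ii) the identity convention --- the step $a\circ e=a$ used in the converse is precisely $e_\circ=e_\star$, so the statement is to be read with $(B,\circ)$ and $(B,\star)$ sharing an identity, which is harmless since it is forced by the compatibility law on one side and built into the construction of $\star$ from $N$ on the other.
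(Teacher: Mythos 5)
Your argument is correct, and a point of information first: the paper itself contains no proof of this statement — it is quoted from [GV17], Proposition 1.9 — so there is no internal proof to compare against. What you have written is essentially the standard Guarnieri–Vendramin argument: your $\gamma_a(x)=a^{-1}\star(a\circ x)$ are exactly the $\lambda$-maps of skew brace theory, the factorization $\lambda_\circ(a)=\lambda_\star(a)\gamma_a$ together with the decomposition $\Hol(B,\star)=\lambda_\star(B)\Aut(B,\star)$ (with $\Aut(B,\star)$ the stabilizer of the identity) reduces the theorem to ``$\gamma_a$ is additive for every $a$,'' and both implications are then the short computations you give. You were also right to make the identity convention explicit rather than treat it as automatic: as literally stated the ``if'' direction does require $e_\circ=e_\star$ — for a fixed $t\neq e$ the operation $a\circ b=a\star t^{-1}\star b$ makes $(B,\circ)$ a group with $\lambda_\circ(B)\subseteq\lambda_\star(B)\subset\Hol(B,\star)$, yet the compatibility law fails — while for the ``only if'' direction the law forces the identities to coincide, and in the Hopf--Galois application the construction of $\star$ from the regular subgroup $N$ via $n\mapsto n(e)$ builds the common identity in, exactly as you observe. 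So the proof stands as written, with that reading of the statement being the intended one.
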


Let $L/K$ be a Galois extension with group $G = (G, \circ)$.  Let $H$ be a $K$-Hopf algebra giving a Hopf Galois structure of type $N$ on $L/K$.  Using $N$ to define the group structure $(G, \star)$ on $G$  and then identifying $N$ with $(G, \star)$ as above, then $\lambda_\circ(G)$ normalizes $N = (G, \star)$, so $\lambda_\circ$ is contained in $\Hol(G, \star)$.  Thus $(G, \circ, \star)$ is a skew brace.  

Conversely, let $(G, \circ, \star)$ be a skew brace.  Let $L/K$ be a Galois extension with Galois group $(G, \circ)$.   Then $L/K$ has a Hopf Galois structure of type $(G, \star)$.  For  given the skew brace structure $(G, \circ, \star)$ on the Galois group $(G, \circ)$ of $L/K$, then  $\lambda_\circ(G)$ is contained in $\Hol(G, \star)$, and so the subgroup $N = \lambda_\star(G) \subset \Perm(G)$ is normalized by $\lambda_\circ (G)$.  So $N$ corresponds by Galois descent to a Hopf Galois structure on $L/K$ of type $(G, \star)$. 

\begin{remark} We note that the correspondence that connects regular subgroups $N$ of $\Perm(\Gamma)$ normalized by $\lambda(\Gamma)$ and isomorphic to $(G, \star)$ to isomorphism types of skew braces $(G, \circ, \star)$ with $(G, \circ) \cong \Gamma$ and $(G, \star) \cong N$ is not bijective.  We have (c. f. [NZ19], Corollary 2.4):

\begin{proposition} [Byott, Nejabati Zenouz] \label{BZ} Given an isomorphism type $(B, \circ, \star)$ of skew brace, the number of Hopf Galois structures on a Galois extension $L/K$ with Galois group isomorphic to $(B, \circ)$ and skew brace isomorphic to $(B, \circ, \star)$ is 
\[ \Aut(B, \circ)/\Aut_{sb}(B, \circ, \star).\]
\end{proposition}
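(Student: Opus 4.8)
The plan is to recast the count as an orbit problem for the natural action of $\Aut(B,\circ)$ on a suitable set of group operations. Since only isomorphism types matter, I may assume the Galois group of $L/K$ is literally $G=(B,\circ)$. By the correspondence recalled in Section~2, Hopf Galois structures on $L/K$ are in bijection with regular subgroups $N$ of $\Perm(G)$ normalized by $\lambda_\circ(G)$, and each such $N$ determines, via the bijection $b\colon N\to G$, $n\mapsto n(e)$, a group operation $\star_N$ on $G$ making $(G,\circ,\star_N)$ a skew brace with $\lambda_{\star_N}(G)=N$. Thus the quantity to be computed is $|X|$, where $X$ is the set of regular subgroups $N\le\Perm(G)$ normalized by $\lambda_\circ(G)$ for which $(G,\circ,\star_N)\cong(B,\circ,\star)$ as skew braces.

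First I would check that $N\mapsto\star_N$ is a bijection from $X$ onto the set $Y$ of all group operations $\star'$ on the underlying set of $B$ such that $(B,\circ,\star')$ is a skew brace isomorphic to $(B,\circ,\star)$. It is injective because $\star'$ recovers $N$ as $\lambda_{\star'}(B)$; it is surjective because, given $\star'\in Y$, Theorem~\ref{1.9} shows $\lambda_\circ(B)\subseteq\Hol(B,\star')$, so $N':=\lambda_{\star'}(B)$ is a regular subgroup normalized by $\lambda_\circ(B)$ with $\star_{N'}=\star'$, hence $N'\in X$. So it suffices to compute $|Y|$.

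Next I would let $\Aut(B,\circ)$ act on group operations on $B$ by transport of structure, $a\star'^{\phi}b:=\phi\big(\phi^{-1}(a)\star'\phi^{-1}(b)\big)$; because $\phi$ is an automorphism of $(B,\circ)$ this transport fixes $\circ$ and carries skew braces to isomorphic skew braces, so the action restricts to $Y$. The key observation is that any skew brace isomorphism $(B,\circ,\star)\to(B,\circ,\star')$ is in particular a permutation of $B$ intertwining the two copies of $\circ$, hence an element $\phi\in\Aut(B,\circ)$, and it carries $\star$ to $\star'$, i.e. $\star'=\star^{\phi}$; therefore $\Aut(B,\circ)$ acts transitively on $Y$. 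The stabilizer of $\star$ is $\{\phi\in\Aut(B,\circ):\phi(x\star y)=\phi(x)\star\phi(y)\}=\Aut(B,\circ)\cap\Aut(B,\star)=\Aut_{sb}(B,\circ,\star)$. By orbit–stabilizer, $|Y|=[\Aut(B,\circ):\Aut_{sb}(B,\circ,\star)]$, the asserted value (the symbol in the statement being read as this index).

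I do not expect a serious obstacle: the proposition is at bottom a clean orbit count. The one point needing care is the identification of skew brace isomorphisms with $\circ$-automorphisms transporting $\star$ — forced by the fact that both skew braces in question share the same circle operation $\circ$ — together with the verification that $N\mapsto\star_N$ is genuinely a bijection onto $Y$ (injectivity from $\lambda_{\star_N}(G)=N$, surjectivity from Theorem~\ref{1.9}). Everything else is routine bookkeeping with transport of structure.
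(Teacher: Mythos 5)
Your proof is correct. Note that the paper itself gives no proof of Proposition \ref{BZ}: it is quoted as a known result of Byott and Nejabati Zenouz (cf. [NZ19], Corollary 2.4, and the Byott--Vendramin appendix to [SV18]), so there is no in-paper argument to compare against. Your argument is the standard one from those sources: identify Hopf Galois structures with regular subgroups $N$ normalized by $\lambda_\circ(B)$, identify these in turn (via $N=\lambda_{\star_N}(B)$, using Theorem \ref{1.9} for surjectivity) with the operations $\star'$ making $(B,\circ,\star')$ a skew brace isomorphic to $(B,\circ,\star)$, and then apply orbit--stabilizer for the transport-of-structure action of $\Aut(B,\circ)$, whose transitivity on this set follows from the observation that an isomorphism of skew braces sharing the circle operation $\circ$ is precisely a $\circ$-automorphism carrying one $\star$-operation to the other, and whose stabilizer of $\star$ is $\Aut_{sb}(B,\circ,\star)$. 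The only point you leave tacit is that in a skew brace the identities of $(B,\circ)$ and $(B,\star')$ coincide, which is what guarantees $\star_{N'}=\star'$ when $N'=\lambda_{\star'}(B)$; this is routine and does not affect correctness.
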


Here $\Aut_{sb}(B, \circ, \star)$ is the group of skew brace automorphisms of $(B, \circ, \star)$, that is, maps from $B$ to $B$ that are simultaneously group automorphisms of $(B, \star)$ and of $(B, \circ)$. \end{remark}

\section{The Galois correspondence for skew braces}
Given that an $H$-Hopf Galois structure on a $G$-Galois extension corresponds to a skew brace $(G, \circ, \star)$ so that $G \cong (G, \circ)$ and $H$ has type $(G, \star)$, we can rephrase the question of identifying the $K$-sub-Hopf algebras of $H$, and hence the question of counting the size of the image of the Galois correspondence for $H$, into  a question of identifying and counting certain subgroups of the skew brace $(G, \circ, \star)$.  This was done in [Ch18].

Let $L/K$ be Galois with group $(G, \circ)$ and $H$- Hopf Galois where $H$ has type $(G, \star)$, so $(G, \circ, \star)$ is a skew brace.   We're interested in the Galois correspondence ratio,
\[GC((G, \circ), (G, \star)) =\frac{|\{\text{$E$ in the image of the Galois correspondence for $H$}\}|}{|\{E:  K \subset E \subset L\}|} .\]

The numerator counts  the $\lambda_{\circ}(G)$-invariant subgroups of $\lambda_{\star}(G)$.  Looking at them in the skew brace setting, we have

\begin{definition}  Let $(G, \circ, \star)$ be a skew brace.  A subgroup $(G', \star)$ of $(G, \star)$ is $\circ$-stable if $\lambda_\star (G')$ is closed under conjugation in $\Perm(G)$ by $\lambda_\circ(G)$.  \end{definition}

This condition is equivalent to 

\[  \text{For all } g \in G, g' \in G', (g \circ g')\star g^{-1} = h' \text{ is in }G'.\]

\begin{remark}  Lemma 2.2 of [DeC19] introduces the automorphism $\rho_g$ of $(G, \star)$ defined by 
\[ \rho_g(g') =  (g \circ g')\star g^{-1}.\]
So a $\circ$-stable subgroup $G'$ of $G$ is a subgroup invariant under the action of $\rho_g$ for all $g$.

Section 5 of [KT19] recasts the Galois correspondence for a Hopf Galois structure of type $(G, \star)$ on a $(G, \circ)$-Galois extension by replacing the skew brace $(G, \circ, \star)$ by its opposite skew brace $(G, \circ, \star')$.  Then a $\circ$-stable subgroup $G'$ of $G$ is what they call a quasi-ideal of the opposite brace.   A quasi-ideal $G'$ is an ideal of $G$ ([GV17], Definition 2.1) if and only if $G'$ is a normal subgroup of $(G, \circ)$.
\end{remark}

We observed in Proposition 4.1 of  [Ch18] that a $\circ$-stable subgroup of $(G, \circ, \star)$ is a subgroup of both $(G, \circ)$ and $(G, \star)$.

Thus, if $L/K$ is a $(G, \circ)$-Galois extension and an $H$-Galois extension where the $H$ structure corresponds to a skew brace structure $(G, \circ, \star)$ on $G$, so that $H$ has type $(G, \star)$, then  the Galois correspondence ratio becomes
\[GC( (G, \circ), (G, \star)) =\frac{|\{\text{$\circ$-stable subgroups of $(G, \star)$}\}|}{|\{\text{subgroups of $(G, \circ)$}\}|} .\]

\section{Radical algebras and the Galois correspondence for corresponding Hopf Galois structures}

A ring  $A = (A, +, \cdot)$ (without unit) is a  radical ring if the operation 
\[ a \circ b = a + b + a\cdot b\]
for all $a$, $b$ in $A$ makes $(A, \circ)$ into a group with identity 0.  If $A$ is a finite ring, then $A$ is Artinian (has descending chain condition on left (or right) ideals), so $A$ is nilpotent (every element of $A$ is nilpotent), c. f. Section 1.2 of [He61].  

Conversely, a nilpotent ring $A$ is a radical ring. That means, if we define the operation $\circ$ on $A$ by 
\[ a \circ b = a + b + ab,\]
then $(A, \circ)$ is a group.  Associativity is clear, the identity is 0, and the inverse of $a$ in $(A, \circ)$, denoted by $\overline{a}$, is
\[ \overline{a} = -a + a^2 - a^3 + \ldots,\]
a finite sum because $A$ is nilpotent.
(The circle group of $A$ is isomorphic (by $a \mapsto -a$) to what is sometimes called the adjoint group of $A$.) 

A radical ring $A$ is a brace $(A, \circ, +)$, as was first observed in [Ru07], and hence is a skew brace.   

Let $(A, +, \cdot)$ be a finite radical ring, and let $a \circ b = a + b + a\cdot b$.  In [Ch18] we observed: 

\begin{proposition}  If  $(A, \circ, +)$ is the skew brace arising from a radical ring $A$, then the $\circ$-stable subgroups of $(A, +)$  are the left ideals of the ring $A$.  \end{proposition}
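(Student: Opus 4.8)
The plan is to unwind the definition of $\circ$-stable subgroup for the specific skew brace $(A, \circ, +)$, using that here the ``additive'' group is $(A,+)$ and the ``circle'' group is $(A,\circ)$ with $a\circ b = a+b+a\cdot b$. Fix a subgroup $(A', +)$ of $(A, +)$. By the definition in Section 3, $A'$ is $\circ$-stable precisely when for all $g \in A$ and all $g' \in A'$ the element $(g \circ g') \star g^{-1}$ lies in $A'$, where $\star = +$ and $g^{-1}$ denotes the additive inverse $-g$. So the first step is simply to compute this element: $(g \circ g') + (-g) = (g + g' + g\cdot g') - g = g' + g\cdot g'$. Hence $A'$ is $\circ$-stable if and only if $g' + g\cdot g' \in A'$ for all $g \in A$, $g' \in A'$.

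The second step is to observe that since $A'$ is already an additive subgroup containing $g'$, the condition $g' + g\cdot g' \in A'$ is equivalent to $g\cdot g' \in A'$ for all $g \in A$, $g' \in A'$; that is, $A\cdot A' \subseteq A'$. Combined with $A'$ being an additive subgroup, this is exactly the statement that $A'$ is a left ideal of the ring $A$. Conversely, if $A'$ is a left ideal then it is an additive subgroup closed under left multiplication by $A$, so $g\cdot g' \in A' $ and therefore $g' + g\cdot g' \in A'$, giving $\circ$-stability. This establishes the equivalence in both directions.

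One point that should be addressed for completeness: a $\circ$-stable subgroup is a priori only assumed to be a subgroup of $(A, +)$, and indeed Proposition 4.1 of [Ch18] (quoted just before the statement) guarantees that $\circ$-stable subgroups are automatically subgroups of both $(A, \circ)$ and $(A, +)$, so there is no mismatch between ``subgroup of $(A,+)$'' in the definition and the ambient structure. Thus nothing is lost by phrasing the conclusion in terms of left ideals of the ring.

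I do not expect a genuine obstacle here; the result is essentially a one-line computation, $(g \circ g') - g = g' + g \cdot g'$, followed by the observation that modulo the additive subgroup $A'$ this is the same as the condition $A \cdot A' \subseteq A'$. The only mild subtlety is bookkeeping about which of the two operations plays the role of ``$\star$'' and which plays ``$\circ$'' in the general $\circ$-stability condition, and being careful that $g^{-1}$ there means the $\star$-inverse, i.e. $-g$; once that is pinned down, the proof is immediate.
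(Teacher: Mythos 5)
Your proof is correct and follows essentially the same route as the paper: unwind the $\circ$-stability condition with $\star=+$, compute $(g\circ g')-g=g'+g\cdot g'$, and note that modulo the additive subgroup this is exactly closure under left multiplication, i.e.\ being a left ideal. The paper phrases the same computation as ``$g\circ g'=h'+g$ for some $h'\in G'$, hence $gg'=h'-g'\in G'$,'' so there is no substantive difference.
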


For the reader's convenience, here is a proof.

\begin{proof} Suppose $(G', +)$ is a $\circ$-stable subgroup of $(A, +)$.  Then for all $g$ in $A$, $g'$ in $G'$, there is some $h'$ in $G'$ so that $g \circ g' = h' + g$, or equivalently:
\beal g + g' + gg' &= h' + g\\
gg' &= h' - g' \text{ in } G'.\eeal
So $G'$ is closed under left multiplication by elements of $A$, hence is a left ideal of $A$.  

Conversely, if $G'$ is a left ideal of $A$, then for all $g'$ in $G'$, $g$ in $A$, $gg'$ is in $G'$, so $gg' + g'  = h'$ is in $G'$.  So
\[g \circ g' = g + g' + gg' = h' + g,\]
the condition that $(G', +)$ is a $\circ$-stable subgroup of $(A, +)$. \end{proof}

Thus if $A$ is a finite radical ring and $L/K$ is a Galois extension with Galois group $(A, \circ)$ with a Hopf Galois structure with $H$ of type $(A, +)$, then the Galois correspondence ratio for $H$ acting on $L/K$ is
\[ GC( (A, \circ), (A, \star)) =  
\frac{|\{\text{left ideals of $A$}\}|}{|\{\text{subgroups of $(A, \circ)$}|} .\]

In [Ch18] we illustrated this result by looking at two non-commutative nilpotent $\Fp$-algebras  of dimension 3 of [DeG17].  Here is a four-dimensional example.

\begin{example}  We look at de Graaf's [DeG17] $\Fp$-algebra $A^0_{4, 21}$ with an $\Fp$-basis consisting of elements $a, b, c, d$ with multiplication given by $a^2 = c, ab = d$ and all other products of basis elements $= 0$.  Assume $p > 2$.  Then a subset $J$ of $A$ is a left ideal of $A$ if and only if $aJ \subset J$. So if $J$ is a left ideal of $A$ and $r = r_1a + r_2b + r_3c + r_4d$ is in $J$, then $ar = r_1c + r_2d$ is also in $J$.  

A 4 $\times$ 4 row-reduced echelon matrix in $M_4(\Fp)$ with  pivots (leading ones) in columns $c_1,\ldots, c_r$ will be called a matrix of form $(c_1 \ldots c_r)$.  Every non-zero subspace of $\Fp^4$ is generated by the non-zero rows of a unique row-reduced echelon matrix.  

From that viewpoint, we find that the non-zero left ideals of $A$ correspond to all echelon matrices of forms (3), (4), (34), (24), (134), (234), (1234) $= I$, and also matrices of form (13) with two parameters $r$ and $s$:   the non-zero rows of such a echelon matrix form the matrix
\[ \bpm 1 & r & 0 & s\\0&0&1&r \epm .\]
Counting the number of parameters for each form gives
\[ p + 1 + 1 + p + p + 1 + 1 + p^2 = p^2 + 3p + 4\]
non-zero left ideals of $A$.

Suppose $L/K$ is a Galois extension with Galois group $(A, \circ)$ and has a $H$-Hopf Galois structure of type $(A, +)$ corresponding to the skew brace $(A, \circ, +)$.  Then the Galois correspondence ratio for the $H$-Hopf Galois structure is 
\[GC((G, \circ), (G, +)) = \frac {|\{\text{ left ideals of $A$}\}|}{|\{\text{subgroups of $(A, \circ)$}\}|}. \]
Including the zero ideal, the numerator is $p^2 + 3p + 5$.  

To find the denominator, the number of subgroups of $(A, \circ)$, is a bit more challenging.  

Now $A$ is the $\Fp$-algebra with $\Fp$-basis $\{a, b, c, d\}$, where $a^2 = c, ab = d$ and all other products $= 0$.  For $x, y$ in $A$, define $x \circ y = x + y + xy$.  Since $p > 2$,  $(A, \circ)$ is a group of exponent $p$, for given any $r, s, t, u$ in $\Fp$,  $ra \cdot ra = r^2c, ra\cdot sb = rsd$, so one sees easily by induction that
\[ (ra + sb + tc + ud)^{\circ m} = mra + msb +  (m +\binom m2)r^2c +  (m + \binom m2)rsd\]
 for all $m \ge 1$.  

Since  $(\langle b, c, d\rangle, \circ) = (\langle b, c, d\rangle, +)$, there are $2p^2 +2p + 4$ subgroups of $\langle b, c, d \rangle$.  

Then we must count the subgroups of $(A, \circ)$ that contain an element of the form 
$\alpha = ra + sb + tc + wd$ with $r \ne 0$. By the formula above, we can assume $r = 1$.  Thus we have $p^3$ cyclic subgroups of order $p$.   We need also to count non-cyclic subgroups $G'$ that are not contained in $\langle b, c, d \rangle$.  

Let $w_1, w_2, \ldots, $ denote elements of $\langle c, d \rangle$.  
Any subgroup of $(A, \circ)$ that contains $\alpha_1 = a + sb + w_1$ also contains 
 \[ \alpha_1^{-1} = -a -sb -w_1 + c + sd, \]
 where $\alpha_1 \circ \alpha_1^{-1} = 0$.  
 
Let $G' = \langle \alpha_1, \alpha_2 \rangle$, where
\[ \alpha_1 = a + sb + w_1 \text{  and  }\alpha_2 = a + s'b + w_2. \]
First suppose $s' \ne s$.  Then $G'$ contains  
\[ \alpha_1^{-1} \circ \alpha_2 = (s'-s)b + w_3.\]
Now the $\circ$-subgroup $\langle b, c, d\rangle$ of $(A, \circ)$ is isomorphic to $\Fp^3$, so if  $(s'-s)b + w_3$ is in $G'$ and $s - s' \ne 0$, then  $G'$ contains $b + w_4$, with inverse $-b - w_4$, so contains
\[ (-s' b -w_5)\circ \alpha_2 = (-s'b -w_5) + a + s'b + w_2 = a + w_6,\]  
with $\circ$-inverse $-a -w_6+ c$.  So $G'$ contains $a + w_6$ and $b + w_4$.  But then $G'$ contains 
\beal &(-b - w_4) \circ (a + w_6) \circ (b + w_4) \circ (-a - w_6 + c)\\
&= (a - b -w_4+w_6)\circ (-a + b + w_4 - w_6 + c\\
&= c - c + d = d.\eeal
So if $G' = \langle \alpha_1,  \alpha_2  \rangle$, then 
\[ G' = \langle a + sc, b + tc, d \rangle\]
for $s, t$ in $\Fp$.  There are $p^2$ such groups.

If $s - s' = 0$, then $G' = \langle a + sb + tc + ud, t'c + u'd\rangle$ for some $t, t', u, u'$ in $\Fp$.    If $t' \ne 0$, then $G' = \langle a + sb + u''d, c + u''' d \rangle$ for $s, u'', u'''$ in $\Fp$,  so there are $p^3$ groups of that form.  If $t' = 0$, then $G' = \langle a + sb + tc, d\rangle$, so there are $p^2$ groups of that form.

Adjoining $c$ to any of the  groups of the last two forms yields $p$ groups of the form $G' = \langle a + sb, c, d \rangle$.    

Including $G' = G$, we have $2p^3 + 4p^2 + 3p + 5$ subgroups of $(G, \circ)$.

Thus the Galois correspondence ratio is 
\[GC((G, \circ), (G, +)) = \frac {p^2 + 3p + 5}{2p^3 + 4p^2 + 3p + 5}. \]
For large $p$ this is near $1/2p$.
\end{example}

\section{Bi-skew braces}

\begin{definition}  A bi-skew brace  is a finite set $B$ with two operations, $\star$ and $\circ$ so that $(B, \star)$ is a group, $(B, \circ)$ is a group, and $B$ is a skew brace with either group acting as the additive group:  that is, the two compatibility conditions
\[ a \circ (b \star c) = (a \circ b) \star a^{-1} \star (a \circ c) \]
and
\[ a \star (b \circ c) = (a \star b) \circ \ol{a} \circ  (a \star c) \]
hold for all $a, b, c$ in $B$.   \end{definition}

In Proposition 4.1 of  [Ch19] we showed that radical algebras $A$ with $A^3 = 0$ yield bi-skew braces.  That means that with $\circ$ defined as before, $(A, +, \circ)$ is a skew brace.  Thus if $L'/K'$ is a Galois extension with Galois group $(A, +)$, then $L'/K'$  has a Hopf Galois structure of type $(A, \circ)$ coming from the skew brace structure $(A, +, \circ)$ on $A$.   The image of the Galois correspondence for that Hopf Galois structure is then in bijective correspondence with the set of $+$-stable subgroups of $(A, \circ)$.   

A subgroup $J$ of $(A, \circ)$ is a $+$-stable subgroup if for all $g$ in $A$ and $h$ in $J$, there is some $h'$ in $J$ so that \[ g + h = h' \circ g .\]
Let $g, h$ be arbitrary elements of $J$ and note that $J$ is closed under $\circ$.  Then $J$ is also closed under $+$, hence is a subgroup of $(A, +)$.   Since $h' \circ g = h'g + h' + g$, an alternative version of $+$-stability is that for all $g$ in $A$, $h$ in $J$, there is $h'$ in $J
$ so that 
\[ g + h = h' \circ g = h' + g + h'g,\]
or  $h = h' + h'g$.

We show:  
\begin{proposition}  Let $(A, +, \circ)$ be the skew brace arising from the finite radical ring $(A, +, \cdot)$ with $A^3 = 0$.  Then the $+$-stable subgroups of $(A,\circ)$ are the right ideals of the ring $A$. \end{proposition}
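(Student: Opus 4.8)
The plan is to prove the two inclusions directly, using the hypothesis $A^3 = 0$ to linearise the defining identities. First I would recall the three facts already available: that $x \circ y = x + y + xy$ and that $\overline{x} = -x + x^2$ (the higher terms vanishing since $A^3 = 0$); that a subgroup $J$ of $(A, \circ)$ is $+$-stable precisely when for every $g \in A$ and every $h \in J$ there is some $h' \in J$ with $h = h' + h'g$; and that, as observed just above the statement, any $+$-stable subgroup is automatically an additive subgroup.

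For the forward direction I would take a $+$-stable subgroup $J$, fix $h \in J$ and $g \in A$, and choose $h' \in J$ with $h = h' + h'g$; then $h'g = h - h'$ already lies in $J$ since $J$ is an additive subgroup. Right-multiplying $h = h' + h'g$ by $g$ gives $hg = h'g + h'g^2$, and $h'g^2 = h'\cdot g\cdot g \in A^3 = 0$, so $hg = h'g \in J$; as $h$ and $g$ were arbitrary this says $JA \subseteq J$, i.e. $J$ is a right ideal. For the reverse direction I would take a right ideal $J$, first check it is a $\circ$-subgroup (it contains $0$; for $x,y \in J$ one has $xy \in JA \subseteq J$ and hence $x \circ y = x + y + xy \in J$, and $\overline{x} = -x + x\cdot x \in J$ because $x\cdot x \in JA \subseteq J$), and then, given $h \in J$ and $g \in A$, set $h' := h - hg$, which lies in $J$ because $hg \in JA \subseteq J$, and verify $h' + h'g = h - hg + hg - hg^2 = h$, using $hg^2 = h\cdot g\cdot g = 0$. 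Thus $h'$ is the required witness and $J$ is $+$-stable.

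I do not expect a genuine obstacle: the only point needing care is keeping track that $J$ must be a subgroup under both operations simultaneously — supplied by the cited observation in one direction and by the short computation with $\overline{x} = -x + x^2$ in the other — and the only real idea is choosing the witness $h' = h - hg$ and invoking the vanishing of triple products to collapse $h' + h'g$ back to $h$ (and, in the other direction, to pass from ``$h'g \in J$'' to ``$hg \in J$''). Conceptually this is the mirror image of the earlier $\circ$-stable/left-ideal proposition: in $h' \circ g = h' + g + h'g$ the variable factor $g$ sits on the right of the product $h'g$, so $+$-stability records closure under right multiplication rather than left.
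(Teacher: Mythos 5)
Your proof is correct and follows essentially the same route as the paper's: the same witness $h' = h - hg$ in one direction, and the same trick of multiplying $h = h' + h'g$ on the right by $g$ and killing the triple product in the other. The only difference is that you also verify explicitly that a right ideal is closed under $\circ$ and $\overline{x} = -x + x^2$, a point the paper leaves implicit; this is a harmless (and slightly more complete) addition.
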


\begin{proof}  Let $J$ be a right ideal of the radical ring $A$:  then $J$ is closed under addition and scalar multiplication on the right.  We show that $J$ is $+$-stable. Given $h$ in $J$, $g$ in $G$, let $h' = h - hg$.  Then $h'$ is in $J$ since $J$ is a right ideal, and since $A^3 = 0$, 
\[ h'g = (h - hg)g = hg  = h - h'.\]
So $J$ is $+$-stable.
  
Conversely, if $J$ is $+$-stable, then $J$ is an additive subgroup of $A$, and for all $h$ in $J$, $g$ in $A$, there is some $h'$ in $J$ so that $h'g = h - h'$. 
Since $A^3 = 0$, $0 = h'gg = hg - h'g$, so $hg = h'g = h - h'$.  Thus for all $h$ in $J$, $g$ in $G$, $hg$ is in $J$, and so $J$ is a right ideal of the radical ring $A$.\end{proof}

Thus for a $G = (A, +)$-Galois extension $L'/K'$ with an $H'$-Hopf Galois structure of type $(A, \circ)$, the Galois correspondence ratio is 
\[ GC((A. +), (A, \circ)) =  
\frac{|\{\text{right ideals of $A$}\}|}{|\{\text{subgroups of $(A, +)$}|} .\]

\begin{example}\label{8.2}
One motivation for this paper was to see if there might be any relationship between the Galois correspondence ratios for the two skew braces associated to a bi-skew brace.   

So we look again at de Graaf's [DeG17] example $A^0_{4, 21}$ with an $\Fp$-basis consisting of elements $a, b, c, d$ with multiplication given by $a^2 = c, ab = d$ and all other products of basis elements $= 0$.  Then a subset $J$ of $A$ is a  right ideal of $A$ if and only if  $Ja \subset J$ and $Jb \subset J$, hence  if $J$ is a right ideal of $A$ and $r = r_1a + r_2b + r_3c + r_4d$ is in $A$, then $ra^2 = r_1c$  and $rab = r_1d$ are in $J$.  Thus the non-zero right ideals of $A$ correspond to all echelon matrices of forms  (2), (3), (4), (23), (24), (34), (134), (234), (1234).  Counting the number of parameters for each form gives
\[ p^2 + p  + 1 + p^2 + p + 1 + p + 1 + 1 = 2p^2 + 3p + 4\]
non-zero right ideals of $A$.

To determine the proportions of intermediate fields that are in the image of the Galois correspondence for the  Hopf Galois structures corresponding to the skew brace $(A, +, \circ)$, we also need the numbers of subgroups of $(A, +)$. But since $(A, +) \cong \Fp^4$, the number of subgroups of $(A, +)$ is equal to the number of subspaces of $\Fp^4$, namely $p^4 + 3p^3 + 4p^2 + 3p + 5$.  

Thus if $L'/K'$ is a $(A, +)$-Galois extension with an $H'$-Hopf Galois structure of type $(A, \circ)$, then the proportion of subgroups of $(A, +)$ that are in the image of the Galois correspondence for $H$ is
\beal  GC((A, +), (A, \circ))
&= \frac{|\{\text{$+$-stable subgroups of $(A, \circ)$}\}|}{|\{\text{ subgroups of $(A, +)$}\}|}\\
&= \frac{|\{\text{right ideals of $A$}\}|}{|\{\text{ subgroups of $(A, +)$}\}|}\\
& = \frac{2p^2 + 3p + 5}{p^4 + 3p^3 + 4p^2 + 3p + 5}.\eeal
For large $p$ this is near $2/p^2$.

The two Hopf Galois extensions, related by the bi-skew brace arising from the radical algebra $A$ with $A^3 = 0$, have Galois correspondence ratios that behave like $1/2p$ and $2/p^2$ for large primes $p$.
\end{example}
 
\section{Zappa-Sz\'ep products and the Galois correspondence for corresponding Hopf Galois structures}

A finite group $G$ with identity $e$ and subgroups $G_L$ and $G_R$ is an internal Zappa-Sz\'ep product if $G = G_L G_R$ and $G_L \cap G_R = {e}$.  Other terminology:  two subgroups $G_L$ and $G_R$ of a finite group are complementary if $|G_L| \cdot |G_R| = |G|$ and $G_L \cap G_R = {e}$ ([By15], Section 7), or $G$ admits an exact factorization  through the subgroups $G_L$ and $G_R$ ([SV18], Example 3.6).  Thus every element $g$ of $G$ can be uniquely written as $g = g_L g_R$ for $g_L$ in $G_L$, $g_R$ in $G_R$.
  
Denote the group operation on $G$ by $\cdot$, usually omitted.

In general, for groups $\Gm, G$ of the same finite order, a fixed point free pair of homomorphisms from $\Gm$ to $G$ yields a Hopf Galois structure of type $G$ on a $\Gm$-Galois extension of fields, or equivalently, a skew brace structure $(G, \circ, \cdot)$ on the additive group $G = (G, \cdot)$, where $(G, \circ) \cong \Gm$.  In the Hopf Galois setting the method was first used for $\Gm = G$ in [CCo07] and in general in [BC12], c. f. Remark 7.2 of [By15].  In the skew brace setting the construction is noted without details in Example 3.6 of [SV18].  

Here is how it works for Zappa-Sz\'ep products.  

Given a Zappa-Sz\'ep product $G = G_L \cdot G_R$ there is a pair of homomorphisms $\beta_L$ and $\beta_R:  G_L \times G_R \to G$ given by $\beta_L(g_L, g_R) = g_L$, $\beta_R(g_L, g_R) = g_R$.  Since $G_L \cap G_R = \{e\}$, $(\beta_L, \beta_R)$ is a fixed point free pair of homomorphisms from $G_L \times G_R$ to $G$:
$\beta_L(g_L, g_R) = \beta_R(g_L, g_R)$ if and only if $g_L = g_R = e$.  

Using the left and right regular representations:   $\lambda, \rho:  G  \to \Perm(G)$ given by $\lambda(g)(x) = gx, \rho(g)(x) = xg^{-1}$ for $g, x$ in $G$,  the  fixed point free pair $(\beta_L, \beta_R)$   yields a regular embedding 
\[ \beta:  G_L \times G_R \to \lambda(G) \rtimes \Inn(G, \cdot) \subset \Hol(G, \cdot) \]
defined for $x$ in $G$ by
\beal \beta(g_L, g_R)(x) &=  \lambda(\beta_L(g_L, g_R)\rho(\beta_R(g_L, g_R)(x)\\
&= \lambda(g_L)\rho(g_R)(x) \\&= g_Lx g_R^{-1}\\
&= g_Lg_R^{-1}g_R x g_R^{-1}\\
&= \lambda(g_Lg_R^{-1})C(g_R)(x)  \subset \lambda(G)\cdot \Inn(G, \cdot).  \eeal
The regular embedding $\beta:  G_L \times G_R \to \Hol(G, \cdot)$ yields a bijection $b:  G_L \times G_R \to G$ by 
\[ b(g_L, g_R) = \beta(g_L, g_R)(e) = g_L e g_R^{-1} = g_Lg_R^{-1}.\]
Then $b$ defines a new group operation $\circ$ on $G$ from the direct product operation on $G_L \times G_R$ by 
\[ b(g_L, g_R) \circ b(h_L, h_R) = b((g_L, g_R)(h_L, h_R)) = b(g_Lh_L, g_Rh_R): \]
that is, 
\[ g_Lg_R^{-1} \circ h_Lh_R^{-1} = (g_Lh_L)(g_Rh_R)^{-1} = 
(g_Lh_L)(h_R^{-1}g_R^{-1})  = g_L(h_Lh_R^{-1})g_R^{-1},\]
or more concisely, $g_Lg_R^{-1} \circ h = g_L h g_R^{-1}$.  
Thus $b: G_L \times G_R \to (G, \circ)$ is an isomorphism, $\beta: G_L \times G_R \to \Hol(G, \cdot)$ becomes the left regular embedding $\lambda_\circ:  (G, \circ) \to \Hol(G, \cdot)$, and the circle operation on $G = G_LG_R$ makes $G$  into a skew brace by Theorem \ref{1.9}.

Suppose $L/K$ is a Galois extension with Galois group $(G, \circ) \cong G_L \times G_R$.  Since $(G, \circ, \cdot)$ is a skew brace, $L/K$ has a Hopf Galois structure by a $K$-Hopf algebra $H$ of type $(G, \cdot) = G_LG_R$.  Then the image of the Galois correspondence for  $H$ corresponds to the $\lambda_\circ(G)$-invariant subgroups of $(G, \cdot)$, and from Section 3 above, these are the $\circ$-stable subgroups of $(G, \cdot)$, namely, the subgroups $G'$ of $(G, \cdot)$ with the property that for all $h$ in $G'$, $g$ in $G$, the element $(g \circ h)\cdot g^{-1}$ is in $G'$.  

\begin{proposition}
Let $(G, \circ, \cdot)$ be the skew brace where $(G, \cdot) \cong G_LG_R$, a Zappa-Sz\'ep product  and let $\circ$ be the direct product operation on $G$ given by $g\circ x = g_L x g_R^{-1}$ for $g = g_Lg_R^{-1}$ in $G$. Then a  subgroup $G'$ of $(G, \cdot)$ is $\circ$-stable if and only if  $G'$ is normalized by $G_L$.
\end{proposition}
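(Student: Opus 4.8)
\emph{Plan.} The statement is really just an unwinding of the elementwise characterization of $\circ$-stability from Section~3, together with the explicit formula $g\circ x=g_Lxg_R^{-1}$. Recall that a subgroup $G'$ of $(G,\cdot)$ is $\circ$-stable precisely when $(g\circ h)\cdot g^{-1}\in G'$ for every $g\in G$ and every $h\in G'$, where $g^{-1}$ is the inverse in $(G,\cdot)$. So everything reduces to simplifying the element $(g\circ h)\cdot g^{-1}$ in the Zappa--Sz\'ep product.

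First I would write a generic element $g\in G$ in the form $g=g_Lg_R^{-1}$ with $g_L\in G_L$, $g_R\in G_R$; this is legitimate because $b(g_L,g_R)=g_Lg_R^{-1}$ is a bijection $G_L\times G_R\to G$ (equivalently, because $G=G_LG_R$ with $G_L\cap G_R=\{e\}$). Its inverse in $(G,\cdot)$ is then $g^{-1}=g_Rg_L^{-1}$, as one checks by multiplying. Using $g\circ h=g_Lhg_R^{-1}$, the $G_R$-parts cancel:
\[
(g\circ h)\cdot g^{-1}=g_Lhg_R^{-1}\cdot g_Rg_L^{-1}=g_Lhg_L^{-1}.
\]
Hence $\circ$-stability of $G'$ is equivalent to the condition that $g_Lhg_L^{-1}\in G'$ for all $g\in G$ and all $h\in G'$.

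Finally I would note that as $g=g_Lg_R^{-1}$ ranges over $G$ the left factor $g_L$ ranges over all of $G_L$ (every $g_L\in G_L$ occurs, e.g.\ taking $g=g_L$ with $g_R=e$), and that the condition above depends only on $g_L$. Therefore $G'$ is $\circ$-stable if and only if $g_Lhg_L^{-1}\in G'$ for all $g_L\in G_L$, $h\in G'$, i.e.\ if and only if $G_L$ normalizes $G'$. In the ``if'' direction no extra closure check is needed, since by Proposition~4.1 of [Ch18] a $\circ$-stable subgroup of $(G,\cdot)$ is already known to be (and is here assumed to be) a subgroup of $(G,\cdot)$. The computation is essentially the entire argument; the only thing to be careful about is tracking which operation each inverse refers to and using the generic form $g=g_Lg_R^{-1}$ (rather than $g_Lg_R$) so that the cancellation goes through. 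I do not expect a genuine obstacle beyond this bookkeeping.
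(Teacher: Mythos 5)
Your argument is correct and is essentially the paper's own proof: both reduce $\circ$-stability to the identity $(g\circ x)\cdot g^{-1}=(g_Lxg_R^{-1})\cdot(g_Rg_L^{-1})=g_Lxg_L^{-1}$ and then observe that $g_L$ ranges over all of $G_L$. Your version just spells out the bookkeeping (the form $g=g_Lg_R^{-1}$, its inverse, and the surjectivity onto $G_L$) that the paper leaves implicit.
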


\begin{proof}  $G'$ is a $\circ$-stable subgroup of $(G, \cdot) \cong G_LG_R$ iff for all $x$ in $G'$, $g$ in $G$,   $( g \circ x)\cdot g^{-1} $ is in $G'$, and  
\[( g \circ x)\cdot g^{-1} = (g_Lxg_R^{-1})\cdot (g_Rg_L^{-1}) = g_Lxg_L^{-1}.\]
\end{proof}

\begin{example}   Let $(G, \cdot) = A_5 = C_5 \cdot A_4$ where $C_5 = \langle (1,2,3,4,5)\rangle$ and $A_4 = \Perm(\{1,2, 3,4\})$ is the stabilizer of 5 (c.f. [By15], Example 7.4).  Let $\sigma = (1,2,3,4,5)$.   A subgroup $G'$ of $A_5$ is $\circ$-stable iff $G'$ is normalized by $G_L$.   Three obvious $\circ$-stable subgroups are $\{(1)\}$, $C_5$ and $A_5$.  Claim:  there is exactly one other $\circ$-stable subgroup.

Since $\sigma^{-1}(a_1, \ldots, a_r)\sigma = (a_1 +1, \ldots, a_r + 1)$,  any non-trivial subgroup $G'$ of $A_5$ that is normalized by $\sigma$ is transitive, so has order a multiple of 5.  Since $A_5$ has no non-trivial subgroups of order $> 12$, $G'$ must have order 5 or 10, and has a characteristic subgroup $H$ of order 5, normalized by $\sigma$.  Thus $H = \langle \sigma \rangle$ and $G' = \langle \sigma, \rho \rangle$ where $\rho$ has order 2 and $\rho \sigma \rho^{-1} = \sigma^{-1}$.  (We can choose $\rho = (1,2)(3, 5)$.)

So there are four $\circ$-stable subgroups of $A_5$.  If $L/K$ is a Galois extension with Galois group $G = C_5 \times A_4$ with a Hopf Galois structure by $H$ of type $A_5$ corresponding to the skew brace defined by the fixed point free pair of homomorphisms from $G$ to $A_5$ as above, then of the intermediate fields $E$ with $K \subseteq E \subseteq L$, exactly four are in the image of the Galois correspondence for $H$.  

How many intermediate subfields are there?  How many subgroups are there of $C_5 \times A_4$?  Since $(|C_5|, |A_4|) = (12, 5) = 1$, the answer is:  twice the number of subgroups of $A_4$, hence $2 \cdot 10 = 20$ subgroups of $C_5 \times A_4$.  (See [DF99], page 112,  for the lattice diagram of subgroups of $A_4$.) So the proportion of intermediate subfields of $L/K$ that are in the image of the Galois correspondence for $H$ is
\[ GC((G, \circ), (G, \cdot)) =  
\frac{|\{\text{$\circ$-stable subgroups of $A_5$}\}|}{|\{\text{subgroups of $C_5 \times A_4$}|} =\frac 4{20}.\]
\end{example}

\section{Semi-direct products}

One set of examples of Zappa-Sz\'ep products are semidirect products of groups.  

Let $G = G_L \rtimes G_R$ be a semidirect product of two finite groups $G_L$ and $G_R$, where $G_L$ is normal in $G$ and the action of $G_R$ on $G_L$ is by conjugation.   

Denote the group operation in $G$ by $\cdot$, which we will often omit.  Thus for $x, y$ in $G$, $xy = x \cdot y$.   

An element of $G$ has a unique decomposition as $x = x_L x_R^{-1}$ for  $x_L$ in $G_L$, $x_R$ in $G_R$.    In  the semidirect product, an element $y_R$ of $G_R$ acts on $x_L$ in $G_L$ by conjugation:
\[ y_R^{-1} x_L = (y_R^{-1}x_Ly_R)y_R^{-1}.\] 
 Then 
\beal x y &= x_L x_R^{-1} y_L y_R^{-1} \\
& = x_L (x_R^{-1} y_Lx_R)x_R^{-1} y_R^{-1} 
.\eeal
 
Along with the given group operation on $G$ we also have  the direct product operation $\circ$ on $G$, as follows:
\beal  x \circ y &=  x_L x_R^{-1} \circ y_Ly_R^{-1}\\
&= x_L y_L y_R^{-1}x_R^{-1}\\
&=  x_L y x_R^{-1}.\eeal

\begin{proposition}\label{9.1}  $(G, \circ, \cdot)$ is a bi-skew brace. \end{proposition}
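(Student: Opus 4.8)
The plan is to invoke Theorem~\ref{1.9} in both directions. The first of the two compatibility conditions --- that $(G,\circ,\cdot)$ is a skew brace with additive group $(G,\cdot)$ --- is nothing but the Zappa-Sz\'ep construction of Section~6 applied to the semidirect product $G=G_L\rtimes G_R$ viewed as a Zappa-Sz\'ep product $G=G_LG_R$ (with the very same operation $\circ$), so it is already available. Hence the whole content of the proposition is the second compatibility condition, namely that $(G,\cdot,\circ)$ is a skew brace with additive group $(G,\circ)$; by Theorem~\ref{1.9}, with the roles of the two operations interchanged, this is equivalent to showing
\[
\lambda_\cdot(G)\subseteq\Hol(G,\circ)=\lambda_\circ(G)\Aut(G,\circ).
\]

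So I would fix $a=a_La_R^{-1}\in G$, with $a_L\in G_L$ and $a_R\in G_R$, and let $\phi_a\colon G\to G$ be conjugation by $a_R^{-1}$, that is $\phi_a(x)=a_R^{-1}xa_R$. The key step is to check that $\lambda_\cdot(a)=\lambda_\circ(a)\circ\phi_a$ in $\Perm(G)$: recalling $a\circ y=a_Lya_R^{-1}$ (the definition of $\circ$) and $a\cdot x=a_La_R^{-1}x$ (trivially, since $a=a_La_R^{-1}$ in $G$), one has
\[
(\lambda_\circ(a)\circ\phi_a)(x)=a\circ(a_R^{-1}xa_R)=a_L(a_R^{-1}xa_R)a_R^{-1}=a_La_R^{-1}x=a\cdot x=\lambda_\cdot(a)(x).
\]
It then remains to verify $\phi_a\in\Aut(G,\circ)$. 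Since $G_L$ is normal in $G$ and $a_R^{-1}\in G_R$, conjugation by $a_R^{-1}$ carries $G_L$ into $G_L$ and $G_R$ into $G_R$, so $\phi_a$ respects the canonical decomposition $x=x_Lx_R^{-1}$; then, using $y\circ z=y_Lzy_R^{-1}$,
\[
\phi_a(y\circ z)=a_R^{-1}y_Lzy_R^{-1}a_R=(a_R^{-1}y_La_R)(a_R^{-1}za_R)(a_R^{-1}y_Ra_R)^{-1}=\phi_a(y)\circ\phi_a(z),
\]
so $\phi_a$ is a homomorphism for $\circ$, and being bijective it is an automorphism of $(G,\circ)$. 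Therefore $\lambda_\cdot(a)\in\lambda_\circ(G)\Aut(G,\circ)=\Hol(G,\circ)$ for every $a$, Theorem~\ref{1.9} gives that $(G,\cdot,\circ)$ is a skew brace, and combined with the first condition $(G,\circ,\cdot)$ is a bi-skew brace.

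The computation is short, and I do not expect a real obstacle; the one point that needs care is the verification that $\phi_a$ genuinely lies in $\Aut(G,\circ)$ --- i.e.\ that conjugation by $a_R^{-1}$, which obviously permutes $G$, actually respects the circle operation --- and this is exactly where the normality of $G_L$ in the semidirect product is used (it fails for a general Zappa-Sz\'ep product). One could instead substitute the explicit formulas for $\cdot$ and $\circ$ directly into the identity $a\cdot(b\circ c)=(a\cdot b)\circ\ol a\circ(a\cdot c)$ and expand, but routing through Theorem~\ref{1.9} is cleaner and reuses the setup of Section~6.
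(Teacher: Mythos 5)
Your proof is correct. The paper itself gives no argument for this proposition --- it simply defers to [Ch19] --- so your write-up is a self-contained verification rather than a reproduction of the paper's proof. Your route fits the paper's framework well: the first compatibility condition (skew brace with additive group $(G,\cdot)$) is indeed already supplied by Section 6, since a semidirect product is a Zappa-Sz\'ep product and the operation $\circ$ defined in Section 7 agrees with the one constructed there; and for the second condition you correctly reduce, via Theorem \ref{1.9} with the roles of the operations exchanged, to showing $\lambda_\cdot(G)\subseteq \Hol(G,\circ)$. The factorization $\lambda_\cdot(a)=\lambda_\circ(a)\phi_a$ with $\phi_a(x)=a_R^{-1}xa_R$ is verified correctly, and your check that $\phi_a\in\Aut(G,\circ)$ is sound: conjugation by $a_R^{-1}$ preserves both $G_L$ (by normality) and $G_R$, hence respects the unique decomposition $x=x_Lx_R^{-1}$, which is exactly what the computation $\phi_a(y\circ z)=\phi_a(y)\circ\phi_a(z)$ needs. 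Your closing remark is also the right diagnosis: normality of $G_L$ is precisely what makes $\phi_a$ a $\circ$-automorphism, and is where the argument breaks for a general Zappa-Sz\'ep product, which is why Section 6 yields only a skew brace while Section 7 yields a bi-skew brace. The alternative you mention --- substituting the explicit formulas into $a\cdot(b\circ c)=(a\cdot b)\circ\ol{a}\circ(a\cdot c)$ --- also works and is closer in spirit to a bare-hands verification, but your holomorph argument is cleaner and reuses the machinery already set up in the paper.
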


This was proved in [Ch19].

So suppose $L'/K'$ is a Galois extension with Galois group $(G, \cdot) = G_L \rtimes G_R$.  Since $(G, \cdot, \circ)$ is a skew brace, $L'/K'$ has a Hopf Galois structure by a $K'$-Hopf algebra $H'$ of type $(G, \circ) \cong G_L \times G_R$.  Then the image of the Galois correspondence for  $H'$ corresponds to the $\lambda_\cdot(G)$-invariant subgroups of $(G, \circ)$, and these are the $\cdot$-stable subgroups of $(G, \circ)$, namely, the subgroups $G'$ of $(G, \circ)$ with the property that for all $x$ in $G'$, $g$ in $G$, the element $(g \cdot x)\circ \ol{g}$ is in $G'$. 
  
\begin{proposition}  Let $(G, \circ, \cdot)$ be the bi-skew brace where $(G, \cdot) \cong G_L \rtimes G_R$, a semidirect product where $G_R$ acts on $G_L$ by conjugation, and let $\circ$ be the direct product operation on $G$ given by $g\circ x = g_L x g_R^{-1}$ for $g = g_Lg_R^{-1}$ in $G$. Then a subgroup $G'$ of $(G, \circ)$ is $\cdot$-stable if and only if for every $x = x_Lx_R^{-1}$ in $G'$ and all $g$ in $G$, $gx_Lg^{-1}x_R^{-1}$ is in $G'$.  \end{proposition}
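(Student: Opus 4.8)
The plan is to take the criterion for $\cdot$-stability recalled just before the proposition — a subgroup $G'$ of $(G,\circ)$ is $\cdot$-stable if and only if $(g\cdot x)\circ\ol{g}\in G'$ for every $x\in G'$ and every $g\in G$, where $\ol{g}$ is the $\circ$-inverse of $g$ — and to reduce everything to the single group identity
\[ (g\cdot x)\circ\ol{g}=g\,x_L\,g^{-1}\,x_R^{-1},\qquad g\in G,\ x=x_Lx_R^{-1}\in G,\]
the inverse $g^{-1}$ being taken in $(G,\cdot)$. Granting this, the proposition is immediate, since $\cdot$-stability of $G'$ then says exactly that $g\,x_L\,g^{-1}\,x_R^{-1}\in G'$ for all $x=x_Lx_R^{-1}\in G'$ and all $g\in G$.

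To prove the identity I would first put the two relevant elements into canonical $G_LG_R^{-1}$-form. Since $b\colon G_L\times G_R\to(G,\circ)$, $b(g_L,g_R)=g_Lg_R^{-1}$, is a group isomorphism, the $\circ$-inverse of $g=g_Lg_R^{-1}$ is $b(g_L^{-1},g_R^{-1})=g_L^{-1}g_R=(g_L^{-1})(g_R^{-1})^{-1}$, so $\ol{g}$ has left part $g_L^{-1}$ and right part $g_R^{-1}$. For $g\cdot x$, substituting $g$ for $x$ and $x$ for $y$ in the formula for $xy$ recorded at the start of this section yields
\[ g\cdot x = g_L\,(g_R^{-1}x_Lg_R)\,g_R^{-1}x_R^{-1} = \bigl[g_L\,(g_R^{-1}x_Lg_R)\bigr]\,(x_Rg_R)^{-1};\]
the bracketed factor lies in $G_L$ because $G_L$ is normal, and $x_Rg_R\in G_R$, so this is the canonical decomposition of $g\cdot x$, with left part $g_L(g_R^{-1}x_Lg_R)$ and right part $x_Rg_R$.

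Then I would substitute into the rule $u\circ v = u_L\,v\,u_R^{-1}$ for the $\circ$-product (also recorded at the start of this section), with $u=g\cdot x$ and $v=\ol{g}$:
\[ (g\cdot x)\circ\ol{g} = \bigl(g_Lg_R^{-1}x_Lg_R\bigr)\bigl(g_L^{-1}g_R\bigr)\bigl(g_R^{-1}x_R^{-1}\bigr) = g_Lg_R^{-1}\,x_L\,g_Rg_L^{-1}\,x_R^{-1},\]
the $g_R$ from $\ol{g}$ cancelling the $g_R^{-1}$ from $(x_Rg_R)^{-1}$. Since $g_Lg_R^{-1}=g$ and $g_Rg_L^{-1}=g^{-1}$ in $(G,\cdot)$, the right-hand side equals $g\,x_L\,g^{-1}\,x_R^{-1}$, giving the identity.

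The argument is entirely computational, so the only real work is bookkeeping of the left and right parts: in particular checking that the displayed factorization of $g\cdot x$ is genuinely its $G_LG_R^{-1}$-decomposition (this is the one place normality of $G_L$ enters) and keeping the $\circ$-inverse $\ol{g}=g_L^{-1}g_R$ distinct from the $\cdot$-inverse $g^{-1}=g_Rg_L^{-1}$. I do not expect any genuine obstacle beyond this.
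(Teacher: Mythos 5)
Your proof is correct and follows essentially the same route as the paper's: both reduce $\cdot$-stability to the identity $(g\cdot x)\circ\ol{g}=g\,x_L\,g^{-1}\,x_R^{-1}$, established by rewriting $g\cdot x$ in its $G_LG_R^{-1}$-form (using normality of $G_L$), applying the rule $u\circ v=u_Lvu_R^{-1}$ with $\ol{g}=g_L^{-1}g_R$, and simplifying. Your bookkeeping of left/right parts and of the two inverses matches the paper's computation step for step.
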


\begin{proof}.   A subgroup $G'$ of $(G, \circ)$ is $\cdot$-stable iff for all $x$ in $G'$, $g$ in $G$, the element $(g \cdot x)\circ \ol{g}$ is in $G'$.   Note that if $g = g_Lg_R^{-1}$, then $\ol{g} = g_L^{-1}g_R$.  So for $g, x, y$ in $G$, 
\beal (g \cdot x)\circ \ol{g} &=g_Lg_R^{-1}x_Lx_R^{-1} \circ \ol{g} \\
&= (g_Lg_R^{-1}x_Lg_R)(g_R^{-1} x_R^{-1}) \circ \ol{g} \\
&= (g_Lg_R^{-1}x_Lg_R)\ol{g}  (g_R^{-1} x_R^{-1}) \\
&= (g_Lg_R^{-1}x_Lg_R)(g_L^{-1}g_R)(g_R^{-1} x_R^{-1}) \\
&= (g_Lg_R^{-1})x_L(g_Rg_L^{-1})(g_Rg_R^{-1}) x_R^{-1}\\
&= gx_Lg^{-1}x_R^{-1}.\eeal
\end{proof}

The remainder of the paper is devoted to examples.  In the examples, the circle operation $\circ$ is $+$, the usual addition of modular arithmetic.
\begin{example}
Let $(G, +) = G_L \times G_R = \ZZ_9 \times \ZZ_6$, the direct product with the usual operation, $(r, s) + (r', s') = (r + r', s + s')$, and identify $(r, s)$ with $r\cdot 2^s$ in $(G, \cdot) = \ZZ_9 \rtimes_2 U_9 \cong \Hol(C_9)$.  So $(r, s) \cdot (r', s') = (r + 2^sr', s + s')$.  

We wish to find the $+$-stable subgroups of $(G, \cdot) $ and the $\cdot$-stable subgroups of $(G, +)$.  Since a $+$-stable subgroup of $(G, \cdot)$ is a subgroup of $(G, +)$, and a $\cdot$-stable subgroup of $(G, +)$ is a subgroup of $(G, \cdot)$, all subgroups of interest are subgroups of the abelian group $(G, +)$.  So we begin by finding the subgroups of the direct product $(G, +)$, then see which are $+$-stable and which are $\cdot$-stable.

We find that there are 20 subgroups of $(G, +) = \ZZ_9 \times \ZZ_6$:  sixteen are cyclic groups, with generators:
\begin{center}\begin{tabular}
{c|c|c|c|c}
generator & order && generator & order\\\hline
(0,0) & 1 && (0,3) &2  \\
(1,0) &9  && (1,3) &18  \\
(3,0) &3  && (3,3) & 6 \\
(0,2) & 3 && (0,1) & 6 \\
(1,2) & 9 && (1,5) & 18 \\
(3,2) & 3 && (3,5) & 6 \\
(1,4) & 9 && (1,1) & 18 \\
(3,4) & 3 && (3,1) & 6 .\\
\end{tabular}\end{center}

The non-cyclic subgroups of $(G, +)$ are 
\beal   \langle (3, 0), (0, 2) \rangle_+ =  \langle (3, 0), (0, 2) \rangle_\cdot &\text{  of order  } 9,\\
 \langle (3, 0), (0, 1) \rangle_+ = \langle (3, 0), (0, 1) \rangle_\cdot &\text{  of order  } 18,\\
 \langle (1, 0), (0, 2) \rangle_+ =\langle (1, 0), (0, 2) \rangle_\cdot &\text{  of order  } 9,\\
  G = \langle (1, 0), (0, 1) \rangle_+ =\langle (1, 0), (0, 1) \rangle_\cdot &\text{  of order  } 54.\eeal
All are subgroups of $(G, \cdot)$.

A subgroup $G'$ of $(G, +)$ is $\cdot$-stable if for all $(r, s)$ in $G'$  and $(t, -u)$ in $G$, 
\[(t, -u)^{-1}(r, 0)( t, -u)(0, s)\]
is in $G'$.  Since $ (-2^{u}t, u)(t, -u)= (-2^ut + 2^ut, u + (-u)) = (0, 0)$, we have that
\beal (t, -u)^{-1}(r, 0)( t, -u)(0, s) &=  (-2^{u}t, u)(r, 1)( t, -u)(0, s)\\
&=  (-2^u t + 2^u r, u)( t, -u)(0, s)\\
 &=  (-2^u t + 2^u r + 2^ut, u -u)(0, s)\\
&=  ( 2^u r, 0)(0, s)\\  
  &= (2^ur, s)\eeal
 is in $G'$.   Setting $u = 1$  implies that $(2r, s)$ is in $G'$.  Since $G'$ is a  subgroup of $(G, +)$, therefore 
\[ (2r, s)-(r, s) = (r, 0)\]
 is in $G'$.  Thus:
 
 A subgroup $G'$ of $(G, \cdot)$ is $\cdot$-stable iff  for all $(r, s)$ in $G'$, $(r, 0)$ is in $G'$. 
 
Using this criterion, we find that the  four non-cyclic subgroups of $(G, +)$ are $\cdot$-stable,  and also the cyclic groups with generators
\[  (0 , 0),(1 ,0 ),(3 , 0), ( 0, 1), (0 , 2), (0 , 3), (3 , 3) \text{  and  }(1, 3),    \]
the last two because $10(1, 3) = (1, 0)$ and $4(3, 3) = (3, 0)$.  
Thus there are 12 $\cdot$-stable subgroups of $(G, +)$.

There are 32 subgroups of $(G, \cdot) = \ZZ_9 \rtimes \ZZ_6$, as follows:
There are 26 cyclic subgroups:  $\langle 0, 0\rangle $ of order 1; $\langle r, 3 \rangle $ of order  3 for $0 \le r < 9$;  $\langle 0, 2 \rangle,\langle 3, 0\rangle, \langle 3, 2\rangle, \langle 3, 4\rangle  $ of order 3;  $\langle r, 1\rangle$ of order  6 for $0 \le r < 9$;  and $\langle 1, 2s \rangle$ of order 3 for $0 \le s < 3$.  There are six non-cyclic subgroups, $\langle (a, 0), (0, b)\rangle$ for $a = 1, 3$ and $b = 1, 2, 3$.  

So  if  $L/K$ is $G$-Galois with $G \cong \ZZ_9 \rtimes \ZZ_6 \cong (G, \cdot)$, then for the Hopf Galois structure on $L/K$ corresponding to $(G, +) \cong \ZZ_9 \times \ZZ_6$, the ratio  
\[ GC((G, \cdot), (G, +)) 
= \frac{|\{\text{$\cdot$-stable subgroups of  }(G, +)\}|}{|\{\text{ subgroups of }(G, \cdot)\}|} = \frac {12}{32}.\]

  Now we look at $+$-stable subgroups of $(G, \cdot)$.
  
 A subgroup $G'$ of $(G, \cdot)$ is $+$-stable if $G'$ is a subgroup of both $(G, +)$ and $(G, \cdot)$ and is normalized by $G_L$ in $(G, \cdot)$, that is, 
$ (-t, 0)(r, s)(t, 0) $ is in $G'$ for all $t$ and all $(r, s)$ in $G'$.
Now
\beal (-t, 0)(r, s)(t, 0)  &= (-t + r, s)(t, 0)\\
&= (-t + r + 2^st, s)\\
&= ((2^s-1)t + r, s).\eeal
Since $G'$ is a group under $+$, $G'$ contains 
\[  ((2^s-1)t + r, s) - (r, s) = ((2^s-1)t, 0).\]
Setting $t = 1$, we have:

\[ \text{$G'$ is $+$-stable iff for all $(r, s)$ in $G'$, $(2^s -1, 0)$ is in $G'$. } \]

Thus, if $s = 1$, then $(1, 0)$ is in $G'$; if $s = 2$ then $(3, 0)$ is  in $G'$; if $s = 3$ then $(7, 0)$, hence $(1, 0)$ is in $G'$;  if $s = 4$ then $(15, 0)$, hence $(3, 0)$ is in $G'$; and if $s = 5$ then $(31, 0)$, hence $(1, 0)$ is in $G'$ .  

Among the cyclic subgroups of $(G, \cdot)$, this condition holds trivially for those with generators 
$(0, 0), (1, 0)$ and $(3, 0)$, and also for $(1, 2)$ and $(1, 4)$ since $(3, 0) = (1, 2) \circ (1, 2) \circ (1, 2) = (1, 4)(1, 4)(1, 4)$.   But the condition fails for all other cyclic subgroups of $(G, \cdot)$.     

The condition also holds for the non-cyclic subgroups 
\[ \langle (1, 0), (3, 0)\rangle, \langle (3, 0), (0, 2) \rangle,  \langle (1, 0), (0, 2) \rangle \text{ and }G,\]
 but not for $H =  \langle (3, 0), (0, 1) \rangle$ or $\langle (3, 0),(0, 3)\rangle$ because $(1, 0)$ is not in $H$.
 
 Thus there are nine $+$-stable subgroups of $(G, \cdot)$.  There are 20 subgroups of $(G, +)$.  So if  $L'/K'$ is $G$-Galois with $G \cong \ZZ_9 \times \ZZ_6 \cong (G, +)$, then for the $H'$-Hopf Galois structure  corresponding to $(G, \cdot) \cong \ZZ_9 \rtimes \ZZ_6$, the ratio  
\[GC( (G, +), (G, \cdot)) = \frac{|\text{$+$-stable subgroups of }(G, \cdot)|}{|\text{ subgroups of }(G, +)|} = \frac 9{20}.\]

 \end{example}

 \section{Groups of squarefree order}
Hopf Galois structures on groups of squarefree order were studied by [AB18], whose results included showing that if the field extension $L/K$ has a Galois group $G$ cyclic of squarefree order $mn$, then $L/K$ has a  Hopf Galois structure of type $N$ for every group $N$ of order $mn$: each such group $N$ must be a semidirect product of cyclic groups.

We look at some of those examples.

 Let $(G, +) = \ZZ_m \times \ZZ_n$ under componentwise addition, where $m$ and $n>1$ are coprime and squarefree and $n$ divides $\phi(m)$.  Then $(G, +)$ is cyclic of order $mn$, and every element of $G$ may be written as $(r, s) = (r, 0) + (0, s)$ for $r$ modulo $m$, $s$ modulo $n$. Also, $\langle (r, s)\rangle$ contains $(r, 0)$ and $(0, s)$ because $m$ and $n$ are coprime.  

The subgroups of $(G, +)$ are generated by $(r, s)$ where $r $ divides $m$ and $s$ divides $n$, so there are $d(m)d(n)$ subgroups of $(G, +)$, where $d(m)$ is the number of divisors of $m$.   If $m$ is a product of  $g$ distinct primes, and  $n$ is a product of $h$ distinct primes, then $d(m) = 2^g$, $d(n) = 2^h$.  Hence the number of subgroups of $(G, +)$ is $2^{g+h}$.  

Let $b$ have order $n$ in $U_m$, the group of  units modulo $m$.  Form the semidirect product $(G, \cdot) = \ZZ_m \rtimes_b \ZZ_n$ with the operation
 \[ (r, s) \cdot (r', s') = (r + b^sr', s + s').\]
 Then $(G, +, \cdot)$ is a bi-skew brace.
 
 We observe that every subgroup of $(G, +)$ is also a subgroup of $(G, \cdot)$.  For let $G' =\langle (r, s) \rangle =\langle (r, 0), (0, s) \rangle$ be a subgroup of $(G, +)$.  For elements $(cr, ds), (er, fs)$ of $G'$, 
 \[ (cr, ds) \cdot (er, fs) = (cr + b^{ds}er, (d + f)s) = ((c + b^{ds})r,  (d + f)s),\]
 which is in $\langle (r, 0), (0, s) \rangle$.  

Since the $+$-stable subgroups  and the $\cdot$-stable subgroups of $(G, +, \cdot)$ are subgroups of both $(G, +)$ and $(G, \cdot)$, we may  search for each  from among the subgroups $\langle (r, s) \rangle$ of the cyclic group $(G, +)$, where $r$ divides $m$ and $s$ divides $n$.

We first find the $\cdot$-stable subgroups of $(G,  +)$.  

 \begin{proposition}\label{8.1}  Let $(G, +) = \ZZ_m \times \ZZ_n \cong \ZZ_{mn}$ with $(m, n) = 1$, and $(G, \cdot) = \ZZ_m \rtimes_b \ZZ_n$ with $b$  of order $n$ modulo $m$.  Then every subgroup of $(G, +)$ is $\cdot$-stable. \end{proposition}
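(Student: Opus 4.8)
The plan is to feed the proposition into the explicit description of $\cdot$-stability obtained in Section~7, and then observe that the resulting condition is automatically satisfied because subgroups of the cyclic group $\ZZ_m$ are closed under scaling by any residue modulo $m$.

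First I would specialize the characterization of $\cdot$-stable subgroups of $(G, +)$ from Section~7 to the present data: $G_L = \ZZ_m$, $G_R = \ZZ_n$, with $G_R$ acting on $G_L$ via $r \mapsto br$. That criterion says a subgroup $G'$ of $(G, +)$ is $\cdot$-stable exactly when, decomposing each $x \in G'$ as $x = x_L x_R^{-1}$ with $x_L \in G_L$, $x_R \in G_R$, the element $g x_L g^{-1} x_R^{-1}$ lies in $G'$ for all $g \in G$. In coordinates $x = (r, s)$ decomposes with $x_L = (r, 0)$ and $x_R^{-1} = (0, s)$; taking $g = (t, u)$ and computing in $(G, \cdot)$ as in the $\ZZ_9 \rtimes \ZZ_6$ example above, one gets $g x_L g^{-1} = (b^u r, 0)$, hence $g x_L g^{-1} x_R^{-1} = (b^u r, s)$. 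So it suffices to show that for every subgroup $G'$ of $(G, +)$, every $(r, s) \in G'$, and every $u$ modulo $n$, the element $(b^u r, s)$ lies in $G'$.

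Since $G'$ is a subgroup of $(G, +)$ it contains all integer multiples $k \cdot (r, s) = (kr, ks)$, and because $\gcd(m, n) = 1$ the Chinese Remainder Theorem supplies an integer $k$ with $k \equiv b^u - 1 \pmod m$ and $k \equiv 0 \pmod n$. For this $k$ we get $k \cdot (r, s) = ((b^u - 1) r, 0) \in G'$, and therefore $(b^u r, s) = (r, s) + ((b^u - 1) r, 0) \in G'$, as required. (Equivalently, since $\gcd(m, n) = 1$ one may split $G' = H_1 \times H_2$ with $H_1 \le \ZZ_m$, $H_2 \le \ZZ_n$; as $\ZZ_m$ is cyclic, $H_1$ is an ideal of $\ZZ/m\ZZ$, hence invariant under multiplication by $b^u$, which is exactly the assertion $(b^u r, s) \in G'$.)

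There is no serious obstacle here: the single computation that is needed — that conjugating $x_L$ by an arbitrary $g$ and reattaching $x_R^{-1}$ merely rescales the $\ZZ_m$-component by the unit $b^u$ — is essentially the one already carried out in the preceding $\ZZ_9 \rtimes \ZZ_6$ example, and the rest is the elementary fact that every subgroup of $\ZZ_m$ is stable under such rescaling. The only point to be careful about is that the decomposition $x = x_L x_R^{-1}$ must be taken relative to the complements $G_L = \ZZ_m$ and $G_R = \ZZ_n$ so that the Section~7 criterion applies verbatim; one sees in passing that squarefreeness of $m$ plays no role, only the coprimality $\gcd(m, n) = 1$.
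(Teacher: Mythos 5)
Your proof is correct and follows essentially the same route as the paper: specialize the Section~7 criterion to see that $\cdot$-stability only asks that $(b^u r, s)\in G'$ whenever $(r,s)\in G'$, then use $\gcd(m,n)=1$ to conclude. The paper phrases the last step via its earlier observation that $\langle (r,s)\rangle$ already contains $(r,0)$ and $(0,s)$, which is the same coprimality splitting your CRT computation establishes.
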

 
\begin{proof}  A subgroup $G'$ of $(G, +)$ is $\cdot$-stable if and only if for all $(x, y)$ in $G'$ and all $g$ in $G$, $(g \cdot x \cdot g^{-1}, y)$ is in $G'$ (where $g^{-1}$ is the $\cdot$ inverse of $G$).

Let $(a, -h)$ be in $G$, then $(a, -h)^{-1} = (-b^ha, h)$.  So for $(x, y) = (x, 0)+ (0, y)$ in $G'$, 
\beal  (-b^ha, h)(x,0)(a, -h)+ (0, y)
&= (-b^ha + b^hx, h)(a, -h)+(0, y)\\
&= (-b^ha + b^hx + b^ha, 0)+(0,y)\\
 &= (b^hx, 0)+(0, y). \eeal 
This is in $G'$ because $(b^hx, 0)$ and $(0, y)$  are in $G'$.

Thus every subgroup of $(G, +)$ is $\cdot$-stable.
\end{proof}

It follows that for a Galois extension $L/K$ with Galois group isomorphic to $(G, \cdot) \cong \ZZ_m \rtimes \ZZ_n$ with a Hopf Galois extension of type  $(G, +) \cong \ZZ_{mn}$  corresponding to the skew brace $(G, \cdot  , + )$, the proportion of intermediate subfields of $L/K$ that are in the image of the Galois correspondence for the Hopf Galois structure on $L/K$ is 
\[GC((G, \cdot), (G, +)) = \frac{|\{\text{   subgroups of }\ZZ_{mn}\}|}{|\{\text{ subgroups of } \ZZ_m \rtimes \ZZ_n \}|}.\]

 Now we look at the $+$-stable subgroups of $(G, \cdot)$.  
 
 A subgroup $\langle (r, 0), (0, s) \rangle$ of $(G, \cdot) =\ZZ_m \rtimes Z_n$ is $+$-stable iff $(b^s -1, 0)$ is in $\langle  (r, 0)\rangle$, iff $r$ divides $b^s-1$.  

Now  $m = p_1\cdots p_g$ and  $n = q_1 \cdots q_h$, products of distinct primes.  So if $r = p_{i_1} \cdots p_{i_k}$ and $b$ has order $n_i$ modulo $p_i$, then $(b^s  -1, 0)$ is in $\langle  (r, 0)\rangle$ if and only if the least common multiple $[n_{i_1}, \ldots, n_{i_k}]$ divides $s$. 

\begin{example}\label{factor}  Consider $(G, +) = \ZZ_{pq} \cong \ZZ_p \times \ZZ_q$ where $q$ is a prime divisor of $p-1$. Let $b$ have order $q$ modulo $p-1$ and let $(G, \cdot) = \ZZ_p \rtimes_b \ZZ_q$ with operation
\[ (r_1, s_1)(r_2, s_2) = (r_1 + b^{s_1}r_2, s_1 + s_2).\]
Then $(G, +)$ has four subgroups, generated by $(1, 0), (0, 1), (1, 1)$ and $(0, 0)$, of orders $p, q, pq$ and 1, respectively.
A subgroup $G' = \langle (r, s) \rangle$ of $(G, \cdot)$ is $+$-stable if and only if it is a subgroup of $(G, +)$ and   $b^s \equiv 1 \pmod{r}$.  The only subgroup of $(G, +)$ that is not $+$-stable is $(0, 1)$, because $b-1$ is not in $\langle 0 \rangle \subset \ZZ_p$.   So for a $(G, +)$-Galois extension $L/K$ with an $H$-Hopf Galois extension of type $(G, \cdot)$,  the ratio, 
\[ GC((G, +), (G, \cdot)) = \frac{|\{ +\text{-stable subgroups of } (G, \cdot)\}|}{|\{\text{subgroups of }(G, +)\}|}  = \frac 34.\]

Every subgroup of $(G, +)$ is $\cdot$-stable.   A computation shows that the number of subgroups of $(G, \cdot)$ is $p+3$:  the $p$ cyclic subgroups of order $q$ generated by $(r, 1)$ for $r = 0, \ldots, p-1$, together with the group generated by $(1, 0)$, of order $p$, and the two trivial groups, $\{0\}$ and $G$.  So for a $(G, \cdot)$-Galois extension $L'/K'$ with an $H'$-Hopf Galois extension of type $(G, +)$, the ratio 
\[ GC( (G, \cdot), (G, +)) =\frac{|\{ \cdot\text{-stable subgroups of } (G, +)\}|}{|\{\text{subgroups of }(G, \cdot) \}|} = \frac 4{p+3}.\]
\end{example}

\begin{example}  Now let $(G, +) = \ZZ_{mn}$ where $m = p_1 \cdots p_g$, and $n = q_1 \cdots q_g$, all pairwise distinct primes, where for $i = 1, \ldots g$, $q_i$ divides $p_i-1$.  Let $b$ have order $q_i$ modulo $p_i$ for all $i$, so $b$ has order $n$ modulo $m$.   Then 

\begin{proposition} With $m, n, b$ chosen as above, 
\beal (G, +)&= \ZZ_m \times \ZZ_n \cong \ZZ_{p_1q_1} \times \cdots \times  \ZZ_{p_gq_g},\\  (G, \cdot)&= \ZZ_m \rtimes_b \ZZ_n \cong \ZZ_{p_1} \rtimes_b \ZZ_{q_1}\times \cdots \times  \ZZ_{p_g} \rtimes_b \ZZ_{q_g},\eeal
and every subgroup of $(G, +)$, resp. $(G, \cdot)$ is a direct product of its projections onto the corresponding subgroups.  \end{proposition}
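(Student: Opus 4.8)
The plan is to prove the two displayed isomorphisms first and then to deduce the subgroup assertion as a general fact about direct products of groups of pairwise coprime order. For the additive group this is immediate: the primes $p_1,\ldots,p_g,q_1,\ldots,q_g$ are pairwise distinct, so the integers $p_1q_1,\ldots,p_gq_g$ are pairwise coprime with product $mn$, and the Chinese Remainder Theorem gives $(G,+) = \ZZ_m\times\ZZ_n\cong\ZZ_{mn}\cong\ZZ_{p_1q_1}\times\cdots\times\ZZ_{p_gq_g}$.

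For $(G,\cdot) = \ZZ_m\rtimes_b\ZZ_n$ I would use the CRT identifications $\ZZ_m\cong\prod_i\ZZ_{p_i}$ and $\ZZ_n\cong\prod_i\ZZ_{q_i}$, writing an element of $\ZZ_m$ as $r=(r_1,\ldots,r_g)$ and an element of $\ZZ_n$ as $s=(s_1,\ldots,s_g)$, where $r_i$, $s_i$ are the residues modulo $p_i$, $q_i$. In the semidirect product $s\in\ZZ_n$ acts on $\ZZ_m$ by multiplication by $b^s$; since $b$ has order $q_i$ modulo $p_i$ and $s\equiv s_i\pmod{q_i}$, we get $b^s\equiv b^{s_i}\pmod{p_i}$. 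Hence under the CRT identification the action of $s$ is coordinatewise, $r\mapsto(b^{s_1}r_1,\ldots,b^{s_g}r_g)$, and the twisted multiplication $(r,s)\cdot(r',s')=(r+b^sr',\,s+s')$ decomposes as the product of the twisted multiplications $(r_i,s_i)\cdot(r_i',s_i')=(r_i+b^{s_i}r_i',\,s_i+s_i')$ on the factors $\ZZ_{p_i}\rtimes_b\ZZ_{q_i}$. This yields the second isomorphism.

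Both groups are now realized as a direct product $G_1\times\cdots\times G_g$ with $|G_i|=p_iq_i$ pairwise coprime, so the subgroup assertion reduces to the statement that any subgroup $H$ of such a product equals $\prod_i\pi_i(H)$, where $\pi_i$ is the $i$-th projection. I would argue by induction on $g$; the case $g=1$ is trivial. For the inductive step write $G=A\times B$ with $A=G_1$ and $B=G_2\times\cdots\times G_g$, so $\gcd(|A|,|B|)=1$. By Goursat's lemma, $H\le A\times B$ is determined by normal subgroups $N_A\trianglelefteq\pi_A(H)$ and $N_B\trianglelefteq\pi_B(H)$ together with an isomorphism $\pi_A(H)/N_A\cong\pi_B(H)/N_B$; the order of this common quotient divides $\gcd(|A|,|B|)=1$, hence it is trivial, forcing $N_A=\pi_A(H)$, $N_B=\pi_B(H)$, and therefore $H=\pi_A(H)\times\pi_B(H)$. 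Applying the inductive hypothesis to $\pi_B(H)\le B$ finishes the argument, and since the proof of this lemma uses only the orders of the factors it applies equally to $(G,+)$ and to $(G,\cdot)$.

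\textbf{Main obstacle.} The only step with real content is the second one: checking that the $b$-twist is ``local'' to each factor, i.e.\ that one may reduce the exponent $s$ modulo $q_i$ before reducing $b^s$ modulo $p_i$. This is exactly the compatibility $\mathrm{ord}_{p_i}(b)=q_i$ built into the hypotheses on $b$ (which also forces $b$ to have order $n$ modulo $m$, consistent with the standing assumptions of the section). Once the action is seen to be coordinatewise, the two isomorphisms and the Goursat argument are formal.
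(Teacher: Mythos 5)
Your proof is correct and follows essentially the same route as the paper: the paper likewise gets the additive decomposition from coprimality, the semidirect decomposition from the fact that the order of $b$ splits across the primes $p_i$ (the paper phrases this as an induction from the case $g=2$, while you verify the coordinatewise action directly via CRT, a cosmetic difference), and the subgroup statement from Goursat's Lemma together with the coprimality of the factor orders. No gaps; your explicit Goursat step is just a fleshed-out version of the paper's one-line justification.
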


The decomposition of $(G, +)$ is obvious; that of $(G, \cdot)$  is a routine induction argument from the case $g = 2$, which in turn follows because the order of $b$ modulo $p_1p_2$ is the product of the coprime orders of $b$ modulo $p_1$ and modulo $p_2$.  The statements about subgroups follow from Goursat's Lemma and the fact that the direct factors have coprime order (or by a Chinese Remainder Theorem argument).

Thus the ratios of Proposition \ref{factor} multiply to yield

\begin{corollary}  With $(G, +)$ and $(G, \cdot)$ as above, 
\beal  GC( (G, \cdot), (G, +)) &= \frac{|\{ \cdot\text{-stable subgroups of } (G, +)\}|}{|\{\text{subgroups of }(G, \cdot)\}|}\\&  = \frac {4^g}{(p_1 +3)(p_2 +3)\cdots( p_g+3)},\eeal and 
\[ GC((G, +), (G, \cdot)) = \frac{ |\{ +\text{-stable subgroups of } (G, \cdot)\}|}{|\{\text{subgroups of }(G, +)\}| } = (\frac 34)^g.\]    \end{corollary}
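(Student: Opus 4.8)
The plan is to combine the Proposition preceding this corollary (the direct-product decomposition of both $(G,+)$ and $(G,\cdot)$, together with the statement that every subgroup of either group is the direct product of its projections onto the factors) with the two ratios computed in Example \ref{factor}. First I would record the numerator and denominator counts from the preceding Proposition: because every subgroup of $(G,\cdot) \cong \prod_{i=1}^g \ZZ_{p_i}\rtimes_b\ZZ_{q_i}$ splits as a product of subgroups of the factors $\ZZ_{p_i}\rtimes_b\ZZ_{q_i}$, the total number of subgroups of $(G,\cdot)$ equals $\prod_{i=1}^g (p_i+3)$ by Example \ref{factor}; likewise the number of subgroups of $(G,+)\cong\prod_{i=1}^g\ZZ_{p_iq_i}$ is $\prod_{i=1}^g 4 = 4^g$. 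So both denominators factor over $i$.

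Next I would check that the properties ``$\cdot$-stable'' and ``$+$-stable'' are themselves multiplicative across the direct-product decomposition. For $\cdot$-stability this is immediate from Proposition \ref{8.1}: every subgroup of $(G,+)$ is $\cdot$-stable, so the numerator of $GC((G,\cdot),(G,+))$ is just the total number of subgroups of $(G,+)$, namely $4^g$, giving $GC((G,\cdot),(G,+)) = 4^g/\prod_i(p_i+3)$. For $+$-stability I would argue that a subgroup $G' = \prod_i G_i'$ of $(G,\cdot)$ (with $G_i' \le \ZZ_{p_i}\rtimes_b\ZZ_{q_i}$) is normalized by $G_L = \prod_i (\ZZ_{p_i}\times 1)$ if and only if each $G_i'$ is normalized by the corresponding $\ZZ_{p_i}$; this follows because $G_L$ acts componentwise and conjugation in a direct product acts componentwise. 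Hence the $+$-stable subgroups of $(G,\cdot)$ are exactly the products of $+$-stable subgroups of the factors, so by Example \ref{factor} there are $\prod_i 3 = 3^g$ of them, and $GC((G,+),(G,\cdot)) = 3^g/4^g = (3/4)^g$.

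I would then assemble the final statement by multiplying the per-factor ratios, which is exactly the content of ``the ratios of Proposition \ref{factor} multiply.'' The main obstacle — really the only substantive point — is verifying that stability is detected factorwise, i.e. that a subgroup of the product is ($\cdot$- or $+$-) stable precisely when each projection is stable in its factor. Once one knows (from the preceding Proposition, via Goursat's Lemma and coprimality of the factor orders) that every subgroup of $(G,+)$ and of $(G,\cdot)$ is already a direct product of its projections, this reduces to the observation that the defining stability conditions in Section 7 — for $\cdot$-stability, invariance under conjugation by $G_L$; for $+$-stability, the condition that $(b^s-1,0)\in\langle(r,0)\rangle$ in each factor — are phrased entirely coordinatewise and hence hold for the product iff they hold in each coordinate. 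Everything else is bookkeeping with the counts from Example \ref{factor}.
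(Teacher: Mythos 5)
Your proposal is correct and follows essentially the same route as the paper: the paper's own argument is precisely that, given the direct-product decomposition of $(G,+)$ and $(G,\cdot)$ and the fact that every subgroup is the product of its projections, the per-factor counts of Example \ref{factor} (together with Proposition \ref{8.1} for $\cdot$-stability) multiply across the $g$ factors to give $4^g/\prod_i(p_i+3)$ and $(3/4)^g$. Your explicit check that the stability conditions are detected coordinatewise is exactly the point the paper leaves implicit in the phrase ``the ratios of Proposition \ref{factor} multiply.''
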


Both ratios go to zero for large $g$.

\end{example}

For a final example, we look at a generalization of the dihedral group $D_m$ where $m$ is odd and squarefree.
\begin{example}
Let $m = p_1 \cdots p_g$, a product of distinct primes, and let $n = q_1\cdots q_h$ where $q_1, \ldots q_h$ are  distinct primes that divide $p-1$ for every prime $p$ dividing $m$. (The dihedral case is  $h =1, n = 2$).  Let $b$ have order $n$ modulo $p_i$ for every $i$.  Let $(G, +) = \ZZ_m \times \ZZ_n \cong \ZZ_{mn}$, $(G, \cdot) = \ZZ_m \rtimes_b \ZZ_n$.  Then the  subgroups  of $(G, +)$ all have the form
$ \langle (r, s) \rangle = \langle (r, 0), (0, s)\rangle$ where $r$ divides  $m$, $s$ divides $n$.

Since every $+$-stable subgroup $G'$ of $(G, \cdot)$ is a subgroup of $(G, +)$, we can assume 
$G' = \langle  (r, s) \rangle$ where $r$ divides $m$, $s$ divides $n$.  Now $G'$ is $+$-stable iff $b^s -1$ is in $\langle r \rangle$.  
 We have two cases:

Case 1:  $r = 1$ and  $b^s -1$ is in $\langle 1 \rangle$ for all $s$.  

Case 2:   $1 < r$ and $r$ divides $m$.   Then some $p_i$ divides $r$, so $b$ has order $n$ modulo $p_i$.  
Then $b^s \equiv 1 \pmod{r}$ if and only if $s \equiv 0 \pmod{n}$.

Thus $\langle (r, s)\rangle$ is $+$-stable for $r = 1$ and all $s$, or for $ r \ne 1$ and $s = n$.  Since we may assume that $r$ divides $m$, $s$ divides $n$, then the number of $(r, s)$ in Case 1 is $2^h$, and the number of $(r, s)$ in Case 2 is $2^g -1$.  So the number of $+$-stable subgroups of $G$ is  $2^h + 2^g -1$.  

The number of subgroups of $(G, +)$ is $2^h \cdot 2^g$.  So the ratio 
\beal GC((G, +), (G, \cdot) )&=  \frac{|\{\text{  $+$-stable subgroups of $(G, \cdot)$}\}|}{|\{ \text{ subgroups of $(G, +)$}\}|}\\& = \frac{2^h  + 2^g -1}{2^{h+g}}. \eeal

On the other hand, by Proposition \ref{8.1}, every subgroup of $(G, +)$ is $\cdot$-stable, so the ratio
\beal GC((G, \cdot), (G, +))&= ( \frac{|\{\text{ $\cdot$-stable  subgroups of $(G, +)$}\}|}{|\{ \text{ subgroups of $(G, \cdot)$}\}|}\\
& = \frac{ |\{\text{ subgroups of $(G, +)$}\}|}{|\{\text{ subgroups of $(G, \cdot)$}\}|}.\eeal

Since $\ZZ_ m \rtimes \ZZ_n$ is metabelian, every subgroup of $(G, \cdot) = \ZZ_m \rtimes \ZZ_n$ has the form $H \rtimes K$ where $H < \ZZ_m$, $K < \ZZ_n$.   If $K = (0)$ then we're counting the $2^g$ subgroups of $\ZZ_m$.  For each $s$ dividing $n$ and $r$ dividing $m$ there are $r$ subgroups of order $(m/r)(n/s)$ of the form
$ \langle (r, 1), (t, s) \rangle$ for $0 \le t < r$.   The total number of subgroups of $(G, \cdot)$ is then 
\[ 2^g + \sum_{s|n, s \ne n} \sum_{r|m} r = 2^g + (2^h -1)\sum_{r|m} r = 2^g + (2^h-1)\sigma(m),\]
where $\sigma(m)$, the sum of the divisors of $m = p_1\cdot \ldots \cdot p^g$, is 
\[ \sigma(m) = \prod_{i = 1}^g (1 + p_i).\]
Thus 
\[ GC((G, \cdot), (G, +)) = \frac {2^{h+g}}{2^g + (2^h-1)\sigma(m)}.\]
Since $\sigma(m) \ge 3^g$,  
\[ GC((G, \cdot), (G, +)) \le  2(\frac 23)^g, \]
which is close to 0 for large $g$.

In particular, for $\ZZ_n = \ZZ_2$, the dihedral case with $m$ odd and squarefree, the ratio
\[ GC((G, \cdot)), (G, +) ) =  GC(D_m, \ZZ_{2m})  \le 2(\frac 23)^g \]
goes to 0 with $g$, while
\[ GC((G, +)), (G, \cdot) ) =GC(\ZZ_{2m}, D_m) = \frac{2^g +1}{2^{g+1}} > \frac 12 \]
for all $g$.

Responding to the question raised in Example \ref{8.2}, this example shows that given a bi-skew brace $(G, \cdot, +)$ with $(G, \cdot) \cong D_m$ and $(G, +) \cong \ZZ_{2m}$ for highly composite square-free odd $m$,   the ratios describing the images of the Galois correspondences for the Hopf Galois structures of type $\ZZ_{2m}$, resp. $D_m$, on Galois extensions with Galois group $D_m$, resp. $\ZZ_{2m}$, corresponding to the bi-skew brace could hardly be more dissimilar.
\end{example}


\begin{thebibliography}{[CDVS06]}

\bibitem
[AB18] {AB18} 
A. A. Alabdali, N. P. Byott, Counting Hopf-Galois structures on cyclic field extensions of squarefree degree, J. Algebra 493 (2018), 1--19.

\bibitem
[By15]{By15}
N. P. Byott, Solubility criteria for Hopf-Galois structures, New York J. Math. 21 (2015), 883-903.

\bibitem
[BC12]{BC12}
N. P. Byott, L. N. Childs, Fixed point free pairs of homomorphisms and nonabelian Hopf Galois structures, New York J. Math. 18 (2012), 707--731.

\bibitem  
[CDVS06]{CDVS06}
A. Caranti, F. Dalla Volta, M.  Sala, Abelian regular subgroups of the affine group and radical rings, Publ. Math. Debrecen 69 (2006),  297--308. 
MR2273982 , Zbl{1123.20002}.

\bibitem  
[CS69]{CS69}
S. U. Chase, M. E.  Sweedler, Hopf Algebras and Galois Theory, Lect. Notes in Math., 97 Springer-Verlag, Berlin-New York (1969)  ii+133 pp.  MR0260724,  Zbl 0197.01403.

\bibitem
[Ch17]{Ch17}
L. N. Childs, On the Galois correspondence for Hopf Galois structures, 
New York J. Math.  (2017), 1--10.

\bibitem
[Ch18]{Ch18}
L. N. Childs, Skew braces and the  Galois correspondence for Hopf Galois structures, 
 J. Algebra 511  (2018), 270--291. Zbl {1396.12003},  arXiv:1802.03448.
 
\bibitem
[Ch19]{Ch19}
L. N. Childs, Bi-skew braces and Hopf Galois structures, New York J. Math. 25 (2019), 574-588.

\bibitem
[CCo07]{CCo07}
L. N. Childs, J.  Corradino, Cayley's Theorem and Hopf Galois structures arising from semidirect products of cyclic groups, J. Algebra 308 (2007), 236--251.

\bibitem
[CG18]{CG18}
L. N. Childs,  C. Greither, Bounds on the number of ideals in finite commutative nilpotent $\Fp$-algebras, Publ. Math. Debrecen 92 (2018), 495--516.


\bibitem
[CRV16]{CRV16}
T. Crespo, A. Rio, M. Vela, On the Galois correspondence theorem in separable Hopf Galois theory, Publ. Math. (Barcelona) 60 (2016), 221--234.

\bibitem
[DeC19]{DeC19}
K. De Commer,  Actions of skew braces and set-theoretic solutions of the reflection equation, Proc. Edinburgh Math. Soc. 62 (2019), 1089-1113.

\bibitem
[DeG18]{DeG18} 
W. A. De Graaf,  Classification of nilpotent associative algebras of small dimension,  International Journal of Algebra and Computation 28 (2018), 133--161. MR3768261, Zbl {06850907}, arXiv:1009.5339.

\bibitem
[DF99]{DF99}
D. S. Dummit, R. M. Foote, Abstract Algebra, 2nd edition, John Wiley and Sons, New York, 1999.

\bibitem
[FCC12]{FCC12}  
S. C. Featherstonhaugh, A. Caranti,  L. N. Childs,  Abelian Hopf Galois structures on prime-power Galois field extensions,  Trans. Amer. Math. Soc. 364 (2012), 3675--3684.  MR2901229, Zbl 1287.12002.

\bibitem
[GP87]{GP87} 
 Greither,C.;  Pareigis, B.  Hopf Galois theory for separable field extensions, J. Algebra 106 (1987),  239--258. MR0878476, Zbl{0615.12026}.

\bibitem
[GV17]{GV17}  
L.  Guarnieri, L. Vendramin,  Skew braces and the Yang-Baxter equation, Math. Comp. 86 (2017), no. 307, 2519--2534. MR3647970, Zbl 1371.16037,  arXiv:1511.03171.

\bibitem
[He61]{He61}
I. N. Herstein,   Theory of Rings, University of Chicago Mathematics Lecture Notes, Spring, 1961.

\bibitem
[KT20]{KT20}
A. Koch, P. J. Truman, Opposite skew braces and applications, J. Algebra 546 (2020), 218--235.

\bibitem
[NZ19]{NZ19}
K.  Nejabati Zenouz,   Skew braces and Hopf-Galois structures of Heisenberg type,   J. Algebra 524 (2019), 187--225.  Zbl{07027115}.

\bibitem
[Ru07]{Ru07}
W. Rump,  Braces, radical rings, and the quantum Yang-Baxter equation,  J. Algebra 307 (2007), 153--170. MR2278047, Zbl {1115.16022},  arXiv:1804.01360.

\bibitem
[SV18] {SV18}
A. Smoktunowicz, L. Vendramin, On skew braces (with an appendix by N. Byott and L.  Vendramin), J. Comb. Algebra 2(2018), 47--86.






\end{thebibliography}
\end{document}